\begin{document}
\title{Positivity and periodicity of $Q$-systems in the WZW fusion ring}
\author{Chul-hee Lee}
\address{School of Mathematics and Physics, The University of Queensland, Brisbane QLD 4072, Australia}
\email{c.lee1@uq.edu.au}
\thanks{This work was supported by the Max Planck Institute for Mathematics and the Australian Research Council.}
\keywords{Kirillov-Reshetikhin modules, $Q$-systems, fusion rings, positivity, periodicity}
\date{February 2013, last modified on \today}
\begin{abstract}
\noindent
We study properties of solutions of $Q$-systems in the WZW fusion ring obtained by the Kirillov-Reshetikhin modules. We make a conjecture about their positivity and periodicity and give a proof of it in some cases. We also construct a positive solution of the level $k$ restricted $Q$-system of classical types in the fusion rings. As an application, we prove some conjectures of Kirillov and Kuniba-Nakanishi-Suzuki on the level $k$ restricted $Q$-systems.
\end{abstract}
\maketitle
\section{Introduction}
Let $\mathfrak{g}$ be a simple Lie algebra over the complex numbers and $k\geq 1$ be an integer. In his work \cite{springerlink:10.1007/BF01840426} on dilogarithm identities associated $\mathfrak{g}$ and $k$, Kirillov claimed, without proof, that the character $Q_{m}^{(a)}$ of a certain finite-dimensional representation 
$\operatorname{res} W_m^{(a)}$ of $\mathfrak{g}$ vanishes if it is evaluated at the element 
$\rho/(k+h^{\vee})\in \mathfrak{h}^{*}$ when $m=t_a k+1$, 
where $h^{\vee}$ is the dual Coxeter number and $\rho$ is the Weyl vector of $\mathfrak{g}$.
Here $a$ denotes a vertex in the Dynkin diagram of $\mathfrak{g}$ and $t_a$ is the ratio of the norm of a long root to that of the simple root $\alpha_a$. This fact was necessary in order to obtain a finite set of conjecturally positive numbers $Q_{m}^{(a)}(\frac{\rho}{k+h^{\vee}})$ for $m=0, 1,\cdots, t_a k$, which was then used to construct the arguments for the Rogers dilogarithm function to formulate certain conjectural dilogarithm identities. 

Kirillov's claim on positivity and vanishing of $Q_{m}^{(a)}(\frac{\rho}{k+h^{\vee}})$ depending on $m$ could be easily settled when $\mathfrak{g}$ is of type $A$, but it was not quite obvious in other cases. After Kirillov's work, Kuniba, Nakanishi and Suzuki subsequently studied the problem in the more general setting and conjectured that similar phenomenon happens for some other elements of $\mathfrak{h}^{*}$, the dual of a Cartan subalgebra and added more conjectures to the original conjecture in the series of their works \cite{Kuniba1992, Kuniba1993, MR1304818,1751-8121-44-10-103001}.

Although the conjectural dilogarithm identity has been proven in \cite{springerlink:10.1007/BF01840426,MR2804544, MR3029995,MR3029994} for all types, the problem of identifying the positive arguments in the identity with the above specialization of characters had remained open for many years without any essential progress. It was quite recent that a proof in type $D$ was obtained in \cite{Lee2012} using the affine Weyl group symmetry in the specializations of the character. A proof in some cases of type $E$ was obtained in \cite{MR3282650}. 

In this paper, we show that the entire problem can be reformulated as the problem of finding the image of the Kirillov-Reshetikhin (KR) modules over $U_q(\hat{\mathfrak{g}})$ into the WZW fusion ring $\operatorname{Fus}_{k}(\mathfrak{g})$ under the composition map
$$
\operatorname{Rep} U_q(\hat{\mathfrak{g}})\xrightarrow{\operatorname{res}} \operatorname{Rep}U_q(\mathfrak{g})\xrightarrow{\beta_k} \operatorname{Fus}_{k}(\mathfrak{g})
$$
where $\operatorname{Rep} U_q(\hat{\mathfrak{g}})$ or $\operatorname{Rep}U_q(\mathfrak{g})$ denotes the Grothendieck ring of the category of finite-dimensional (type 1) representations of $U_q(\hat{\mathfrak{g}})$ or $U_q(\mathfrak{g})$, respectively; see subsections \ref{WZWf} and \ref{subsec:QKR}. These two ring homomorphisms $\operatorname{res}$ and $\beta_k$ had been topics of intensive study, but only separately. For example, to find the image of KR modules and their tensor products under $\operatorname{res}$, the Bethe ansatz has been applied to representation theory \cite{Kirillov1990,MR1745263}. The surjective homomorphism $\beta_k$ plays an important role in the structure theory of $\operatorname{Fus}_{k}(\mathfrak{g})$. Although the idea that one can view the relations among the characters of the KR modules as the one in $\operatorname{Fus}_{k}(\mathfrak{g})$ appeared in \cite{MR1228370}, this idea had not been pushed further.

The main results of this paper can be described as follows. With the above viewpoint and the setup, we prove some conjectures of Kirillov and Kuniba, Nakanishi and Suzuki in all classical types as an application of Theorem \ref{catQ}. Especially, when $\mathfrak{g}$ is of types $A,B,C$ and $D$, we have
$\mathcal{D}_{m}^{(a)}>0$ for $0\le m \le t_a k$ and $\mathcal{D}^{(a)}_{t_a k+1}=0$ where $\mathcal{D}_{m}^{(a)}=Q_{m}^{(a)}(\frac{\rho}{k+h^{\vee}})$ and $Q_{m}^{(a)}$ denotes the character of the KR module $\operatorname{res} W_m^{(a)}$; see Theorem \ref{qdimthm} and Corollary \ref{KNSconj} for more general and precise statements. Although some old questions are now answered, the new problem of determining the image of all KR modules in $\operatorname{Fus}_{k}(\mathfrak{g})$ is not fully solved here. We propose it as Conjecture \ref{pconj}.

The outline of this paper is as follows. In Section \ref{background}, we give a brief review on the notions of the modular $S$-matrix, the WZW fusion ring, $Q$-systems and KR modules. In Section \ref{pp}, we propose the conjecture about the positivity and periodicity of solutions of the $Q$-system in the WZW fusion ring obtained from the KR modules and prove it in some simple cases. At the end of the section, we define the notion of level $k$ restricted $Q$-systems. In Section \ref{sec:possol}, we study the solution of the level restricted $Q$-systems in the WZW fusion ring and especially present positive solutions of the level restricted $Q$-systems of all classical types. In Section \ref{sec:appsfusion}, we study complex solutions of level restricted $Q$-systems. In particular, as an application of our results in the previous section, we prove the conjecture of Kirillov and Kuniba-Nakanishi-Suzuki on the positivity and the other level truncation properties of $Q_{m}^{(a)}(\frac{\rho}{k+h^{\vee}})$ for all classical types.

\subsection*{Acknowledgements}
The author would like to thank the Max Planck Institute for Mathematics for its hospitality and support, where most of this work was done.

\section{Notation and background}\label{background}
In this section, we fix notation related to a simple Lie algebra $\mathfrak{g}$ and recall some results about the modular $S$-matrix and the WZW fusion ring. For a reference, see \cite[Chapter 14 and Chapter 16]{philippe1997conformal}. For $Q$-systems and KR modules in Section \ref{subsec:QKR}, see \cite[Section 13]{1751-8121-44-10-103001} and the references therein.
\subsection{Notation}
Throughout the paper, we will use the following notation :
\begin{itemize}

\item $I=\{1,2,\cdots, r\}$ the index set of the Dynkin diagram of $\mathfrak{g}$ as in Table \ref{tab:Dynkin}
\item $\hat{I}=\{0\}\cup I$ as in Table \ref{tab:Dynkin}
\item $C=(C_{ab})_{a,b\in I}$ the Cartan matrix
\item $\mathfrak{g}$ the finite-dimensional complex simple Lie algebra associated to $C$
\item $\mathfrak{h}$ a Cartan subalgebra of $\mathfrak{g}$
\item $\mathfrak{h}^{*}$ the dual space of $\mathfrak{h}$
\item $\alpha_{i}, i\in I$ the simple roots
\item $\omega_{i}, i\in I$ the fundamental weights
\item $\alpha_{i}^{\vee}, i\in I$ the simple coroots
\item $\Pi=\{\alpha_{i}: i\in I\}$ the simple system
\item $\Delta$ the root system
\item $\Delta_{+}$ the set of positive roots
\item $Q=\oplus_{i\in I}\mathbb{Z} \alpha_i$ the root lattice 
\item $P=\oplus_{i\in I}\mathbb{Z} \omega_i$ the weight lattice
\item $P_{+}=\{\lambda \in P : \lambda =\sum_{i=1}^{r}\lambda_{i}\omega_{i},\,\lambda_{i}\ge 0\}$
\item $Q^{\vee}=\oplus_{i\in I}\mathbb{Z} \alpha_i^{\vee}$ the coroot lattice
\item $\theta$ the highest root $$\theta=\sum_{i=1}^{r}a_i\alpha_{i}=\sum_{i=1}^{r}c_i\alpha_{i}^{\vee}$$
\item $a_0=1$ and $a_i, i\in I$ the marks
\item $c_0=1$ and $c_i, i\in I$ the comarks
\item $(\cdot|\cdot)$ standard symmetric bilinear form on $\mathfrak{h^{*}}$ satisfying $(\alpha_{i}^{\vee}|\alpha_j)=C_{ij}$, $(\omega_{i}|\alpha_{i}^{\vee})=\delta_{ij}$ and the normalization condition $(\theta|\theta)=2$
\item $h=\sum_{i=0}^{r}a_i$ the Coxeter number
\item $h^{\vee}=\sum_{i=0}^{r}c_i$ the dual Coxeter number
\item  $e^{\lambda}, \lambda\in P$ the function $e^{\lambda} : \mathfrak{h}^{*} \to \mathbb{C}$ defined by 
$$\mu \mapsto \exp{2\pi i (\lambda|\mu)}$$
\item $\hat{P}=\oplus_{i\in \hat{I}}\mathbb{Z} \hat{\omega}_i$ the affine weight lattice
\item $\hat{P}^k=\{\hat{\lambda} \in \hat{P}:\hat{\lambda}=\sum_{i=0}^{r}\lambda_{i}\hat{\omega}_{i}\text{ such that}\, \sum_{i=0}^{r}c_{i}\lambda_{i}=k\}$
\item $\hat{P}^{k}_{+}=\{\hat{\lambda} \in \hat{P}^k:\hat{\lambda}=\sum_{i=0}^{r}\lambda_{i}\hat{\omega}_{i},\,\lambda_{i}\ge 0\}$
\item $\pi :\hat{P}\to P$ defined by
\begin{equation}\label{pi}
\pi(\sum_{i=0}^{r}\lambda_{i}\hat{\omega}_{i}):=\sum_{i=1}^{r}\lambda_{i}\omega_{i}.
\end{equation}
\item $\alpha_0=\alpha_0^{\vee}=-\theta$
\item $\hat{\alpha}_j=\sum_{i=0}^{r} (\alpha_j|\alpha_i^{\vee}) \hat{\omega}_{i}$, $j\in \hat{I}$
\item $s_i,i\in I$  the fundamental reflections on $P$ defined by 
$$s_i \lambda=\lambda -(\alpha_i^{\vee}|\lambda)\alpha_i$$
\item $s_i, i\in \hat{I}$ the fundamental reflections on $\hat{P}$ defined linearly by 
$$s_i \hat{\omega}_j=\hat{\omega}_j -\delta_{ij}\hat{\alpha}_i$$
where $\delta_{ij}$ denotes the Kronecker delta
\item $W$ the finite Weyl group generated by $s_i, i\in I$ (acting on $\mathfrak{h^{*}}$)
\item $\widehat{W}$ the affine Weyl group generated by $s_i, i \in \hat{I}$ (acting on $\hat{P}$)
\item $\ell(w)$ the length of $w\in W$, or $\widehat{W}$
\item $\rho=\sum_{i=1}^{r}\omega_{i}\in P$ the Weyl vector
\item $\hat{\rho}=\sum_{i=0}^{r}\hat{\omega}_{i}\in \hat{P}$ the affine Weyl vector 
\item $w\cdot \hat{\lambda}$ for $w\in \widehat{W}$ and $\hat{\lambda}\in\hat{P}$, the shifted affine Weyl group action $$w\cdot \hat{\lambda}:=w(\hat{\lambda}+\hat{\rho})-\hat{\rho}$$
\item $H=\{(a,m):a\in I, m\in \mathbb{Z}_{\ge 0}\}$
\item $H_k=\{(a,m)\in H:a\in I, 0\le m \le t_a k\}$
\item $\mathring{H}_k=\{(a,m)\in H:a\in I, 1\le m \le t_a k-1\}$
\item $\lfloor x \rfloor$ the greatest integer not exceeding $x$
\item $t_a=\frac{2}{(\alpha_a|\alpha_a)},\, a\in I$
\end{itemize}
We give a remark on the bilinear form $(\cdot|\cdot)$. Whenever we have an affine weight $\hat{\lambda}\in \hat{P}$, $(\hat{\lambda}|\cdot)$ is defined to be $(\pi(\hat{\lambda})|\cdot)$. For a given $\lambda\in P$ and an integer $k\ge 1$, we can find a unique element $\hat{\lambda}\in \hat{P}^{k}$ such that $\pi(\hat{\lambda})=\lambda$. We call $\hat{\lambda}$ the \textit{level $k$ affinization} of $\lambda$. 
\unitlength=.95pt
\begin{table}
\caption{The Dynkin diagrams and the extended Dynkin diagrams (reproduced from \cite{MR1745263})}
\label{tab:Dynkin}
\begin{tabular}[t]{rl}
$A_r$:&
\begin{picture}(106,20)(-5,-5)
\multiput( 0,0)(20,0){2}{\circle{6}}
\multiput(80,0)(20,0){2}{\circle{6}}
\multiput( 3,0)(20,0){2}{\line(1,0){14}}
\multiput(63,0)(20,0){2}{\line(1,0){14}}
\multiput(39,0)(4,0){6}{\line(1,0){2}}
\put(0,-5){\makebox(0,0)[t]{$1$}}
\put(20,-5){\makebox(0,0)[t]{$2$}}
\put(80,-5){\makebox(0,0)[t]{$r\!\! -\!\! 1$}}
\put(100,-5){\makebox(0,0)[t]{$r$}}
\end{picture}
\\
&
\\
$B_r$:&
\begin{picture}(106,20)(-5,-5)
\multiput( 0,0)(20,0){2}{\circle{6}}
\multiput(80,0)(20,0){2}{\circle{6}}
\multiput( 3,0)(20,0){2}{\line(1,0){14}}
\multiput(63,0)(20,0){1}{\line(1,0){14}}
\multiput(82.85,-1)(0,2){2}{\line(1,0){14.3}}
\multiput(39,0)(4,0){6}{\line(1,0){2}}
\put(90,0){\makebox(0,0){$>$}}
\put(0,-5){\makebox(0,0)[t]{$1$}}
\put(20,-5){\makebox(0,0)[t]{$2$}}
\put(80,-5){\makebox(0,0)[t]{$r\!\! -\!\! 1$}}
\put(100,-5){\makebox(0,0)[t]{$r$}}
\end{picture}
\\
&
\\
$C_r$:&
\begin{picture}(106,20)(-5,-5)
\multiput( 0,0)(20,0){2}{\circle{6}}
\multiput(80,0)(20,0){2}{\circle{6}}
\multiput( 3,0)(20,0){2}{\line(1,0){14}}
\multiput(63,0)(20,0){1}{\line(1,0){14}}
\multiput(82.85,-1)(0,2){2}{\line(1,0){14.3}}
\multiput(39,0)(4,0){6}{\line(1,0){2}}
\put(90,0){\makebox(0,0){$<$}}
\put(0,-5){\makebox(0,0)[t]{$1$}}
\put(20,-5){\makebox(0,0)[t]{$2$}}
\put(80,-5){\makebox(0,0)[t]{$r\!\! -\!\! 1$}}
\put(100,-5){\makebox(0,0)[t]{$r$}}
\end{picture}
\\
&
\\
$D_r$:&
\begin{picture}(106,40)(-5,-5)
\multiput( 0,0)(20,0){2}{\circle{6}}
\multiput(80,0)(20,0){2}{\circle{6}}
\put(80,20){\circle{6}}
\multiput( 3,0)(20,0){2}{\line(1,0){14}}
\multiput(63,0)(20,0){2}{\line(1,0){14}}
\multiput(39,0)(4,0){6}{\line(1,0){2}}
\put(80,3){\line(0,1){14}}
\put(0,-5){\makebox(0,0)[t]{$1$}}
\put(20,-5){\makebox(0,0)[t]{$2$}}
\put(80,-5){\makebox(0,0)[t]{$r\!\! - \!\! 2$}}
\put(103,-5){\makebox(0,0)[t]{$r\!\! -\!\! 1$}}
\put(85,20){\makebox(0,0)[l]{$r$}}
\end{picture}
\\
&
\\
$E_6$:&
\begin{picture}(86,40)(-5,-5)
\multiput(0,0)(20,0){5}{\circle{6}}
\put(40,20){\circle{6}}
\multiput(3,0)(20,0){4}{\line(1,0){14}}
\put(40, 3){\line(0,1){14}}
\put( 0,-5){\makebox(0,0)[t]{$1$}}
\put(20,-5){\makebox(0,0)[t]{$2$}}
\put(40,-5){\makebox(0,0)[t]{$3$}}
\put(60,-5){\makebox(0,0)[t]{$4$}}
\put(80,-5){\makebox(0,0)[t]{$5$}}
\put(45,20){\makebox(0,0)[l]{$6$}}
\end{picture}
\\
&
\\
$E_7$:&
\begin{picture}(106,40)(-5,-5)
\multiput(0,0)(20,0){6}{\circle{6}}
\put(40,20){\circle{6}}
\multiput(3,0)(20,0){5}{\line(1,0){14}}
\put(40, 3){\line(0,1){14}}
\put( 0,-5){\makebox(0,0)[t]{$1$}}
\put(20,-5){\makebox(0,0)[t]{$2$}}
\put(40,-5){\makebox(0,0)[t]{$3$}}
\put(60,-5){\makebox(0,0)[t]{$4$}}
\put(80,-5){\makebox(0,0)[t]{$5$}}
\put(100,-5){\makebox(0,0)[t]{$6$}}
\put(45,20){\makebox(0,0)[l]{$7$}}
\end{picture}
\\
&
\\
$E_8$:&
\begin{picture}(126,40)(-5,-5)
\multiput(0,0)(20,0){7}{\circle{6}}
\put(80,20){\circle{6}}
\multiput(3,0)(20,0){6}{\line(1,0){14}}
\put(80, 3){\line(0,1){14}}
\put( 0,-5){\makebox(0,0)[t]{$1$}}
\put(20,-5){\makebox(0,0)[t]{$2$}}
\put(40,-5){\makebox(0,0)[t]{$3$}}
\put(60,-5){\makebox(0,0)[t]{$4$}}
\put(80,-5){\makebox(0,0)[t]{$5$}}
\put(100,-5){\makebox(0,0)[t]{$6$}}
\put(120,-5){\makebox(0,0)[t]{$7$}}
\put(85,20){\makebox(0,0)[l]{$8$}}
\end{picture}
\\
&
\\
$F_4$:&
\begin{picture}(66,20)(-5,-5)
\multiput( 0,0)(20,0){4}{\circle{6}}
\multiput( 3,0)(40,0){2}{\line(1,0){14}}
\multiput(22.85,-1)(0,2){2}{\line(1,0){14.3}}
\put(30,0){\makebox(0,0){$>$}}
\put(0,-5){\makebox(0,0)[t]{$1$}}
\put(20,-5){\makebox(0,0)[t]{$2$}}
\put(40,-5){\makebox(0,0)[t]{$3$}}
\put(60,-5){\makebox(0,0)[t]{$4$}}
\end{picture}
\\
&
\\
$G_2$:&
\begin{picture}(26,20)(-5,-5)
\multiput( 0, 0)(20,0){2}{\circle{6}}
\multiput(2.68,-1.5)(0,3){2}{\line(1,0){14.68}}
\put( 3, 0){\line(1,0){14}}
\put( 0,-5){\makebox(0,0)[t]{$1$}}
\put(20,-5){\makebox(0,0)[t]{$2$}}
\put(10, 0){\makebox(0,0){$>$}}
\end{picture}
\\
&
\\
\end{tabular}
\begin{tabular}[t]{rl}
$A_1^{(1)}$:&
\begin{picture}(26,20)(-5,-5)
\multiput( 0,0)(20,0){2}{\circle{6}}
\multiput(2.85,-1)(0,2){2}{\line(1,0){14.3}}
\put(0,-5){\makebox(0,0)[t]{$0$}}
\put(20,-5){\makebox(0,0)[t]{$1$}}
\put( 6, 0){\makebox(0,0){$<$}}
\put(14, 0){\makebox(0,0){$>$}}
\end{picture}
\\
&
\\
\begin{minipage}[b]{4em}
\begin{flushright}
$A_r^{(1)}$:\\$(r \ge 2)$
\end{flushright}
\end{minipage}&
\begin{picture}(106,40)(-5,-5)
\multiput( 0,0)(20,0){2}{\circle{6}}
\multiput(80,0)(20,0){2}{\circle{6}}
\put(50,20){\circle{6}}
\multiput( 3,0)(20,0){2}{\line(1,0){14}}
\multiput(63,0)(20,0){2}{\line(1,0){14}}
\multiput(39,0)(4,0){6}{\line(1,0){2}}
\put(2.78543,1.1142){\line(5,2){44.429}}
\put(52.78543,18.8858){\line(5,-2){44.429}}
\put(0,-5){\makebox(0,0)[t]{$1$}}
\put(20,-5){\makebox(0,0)[t]{$2$}}
\put(80,-5){\makebox(0,0)[t]{$r\!\! -\!\! 1$}}
\put(100,-5){\makebox(0,0)[t]{$r$}}
\put(55,20){\makebox(0,0)[lb]{$0$}}
\end{picture}
\\
&
\\
\begin{minipage}[b]{4em}
\begin{flushright}
$B_r^{(1)}$:\\$(r \ge 3)$
\end{flushright}
\end{minipage}&
\begin{picture}(126,40)(-5,-5)
\multiput( 0,0)(20,0){3}{\circle{6}}
\multiput(100,0)(20,0){2}{\circle{6}}
\put(20,20){\circle{6}}
\multiput( 3,0)(20,0){3}{\line(1,0){14}}
\multiput(83,0)(20,0){1}{\line(1,0){14}}
\put(20,3){\line(0,1){14}}
\multiput(102.85,-1)(0,2){2}{\line(1,0){14.3}} 
\multiput(59,0)(4,0){6}{\line(1,0){2}} 
\put(110,0){\makebox(0,0){$>$}}
\put(0,-5){\makebox(0,0)[t]{$1$}}
\put(20,-5){\makebox(0,0)[t]{$2$}}
\put(40,-5){\makebox(0,0)[t]{$3$}}
\put(100,-5){\makebox(0,0)[t]{$r\!\! -\!\! 1$}}
\put(120,-5){\makebox(0,0)[t]{$r$}}
\put(25,20){\makebox(0,0)[l]{$0$}}
\end{picture}
\\
&
\\
\begin{minipage}[b]{4em}
\begin{flushright}
$C_r^{(1)}$:\\$(r \ge 2)$
\end{flushright}
\end{minipage}&
\begin{picture}(126,20)(-5,-5)
\multiput( 0,0)(20,0){3}{\circle{6}}
\multiput(100,0)(20,0){2}{\circle{6}}
\multiput(23,0)(20,0){2}{\line(1,0){14}}
\put(83,0){\line(1,0){14}}
\multiput( 2.85,-1)(0,2){2}{\line(1,0){14.3}} 
\multiput(102.85,-1)(0,2){2}{\line(1,0){14.3}} 
\multiput(59,0)(4,0){6}{\line(1,0){2}} 
\put(10,0){\makebox(0,0){$>$}}
\put(110,0){\makebox(0,0){$<$}}
\put(0,-5){\makebox(0,0)[t]{$0$}}
\put(20,-5){\makebox(0,0)[t]{$1$}}
\put(40,-5){\makebox(0,0)[t]{$2$}}
\put(100,-5){\makebox(0,0)[t]{$r\!\! -\!\! 1$}}
\put(120,-5){\makebox(0,0)[t]{$r$}}
\end{picture}
\\
&
\\
\begin{minipage}[b]{4em}
\begin{flushright}
$D_r^{(1)}$:\\$(r \ge 4)$
\end{flushright}
\end{minipage}&
\begin{picture}(106,40)(-5,-5)
\multiput( 0,0)(20,0){2}{\circle{6}}
\multiput(80,0)(20,0){2}{\circle{6}}
\multiput(20,20)(60,0){2}{\circle{6}}
\multiput( 3,0)(20,0){2}{\line(1,0){14}}
\multiput(63,0)(20,0){2}{\line(1,0){14}}
\multiput(39,0)(4,0){6}{\line(1,0){2}}
\multiput(20,3)(60,0){2}{\line(0,1){14}}
\put(0,-5){\makebox(0,0)[t]{$1$}}
\put(20,-5){\makebox(0,0)[t]{$2$}}
\put(80,-5){\makebox(0,0)[t]{$r\!\! - \!\! 2$}}
\put(103,-5){\makebox(0,0)[t]{$r\!\! -\!\! 1$}}
\put(25,20){\makebox(0,0)[l]{$0$}}
\put(85,20){\makebox(0,0)[l]{$r$}}
\end{picture}
\\
&
\\
$E_6^{(1)}$:&
\begin{picture}(86,60)(-5,-5)
\multiput(0,0)(20,0){5}{\circle{6}}
\multiput(40,20)(0,20){2}{\circle{6}}
\multiput(3,0)(20,0){4}{\line(1,0){14}}
\multiput(40, 3)(0,20){2}{\line(0,1){14}}
\put( 0,-5){\makebox(0,0)[t]{$1$}}
\put(20,-5){\makebox(0,0)[t]{$2$}}
\put(40,-5){\makebox(0,0)[t]{$3$}}
\put(60,-5){\makebox(0,0)[t]{$4$}}
\put(80,-5){\makebox(0,0)[t]{$5$}}
\put(45,20){\makebox(0,0)[l]{$6$}}
\put(45,40){\makebox(0,0)[l]{$0$}}
\end{picture}
\\
&
\\
$E_7^{(1)}$:&
\begin{picture}(126,40)(-5,-5)
\multiput(0,0)(20,0){7}{\circle{6}}
\put(60,20){\circle{6}}
\multiput(3,0)(20,0){6}{\line(1,0){14}}
\put(60, 3){\line(0,1){14}}
\put( 0,-5){\makebox(0,0)[t]{$0$}}
\put(20,-5){\makebox(0,0)[t]{$1$}}
\put(40,-5){\makebox(0,0)[t]{$2$}}
\put(60,-5){\makebox(0,0)[t]{$3$}}
\put(80,-5){\makebox(0,0)[t]{$4$}}
\put(100,-5){\makebox(0,0)[t]{$5$}}
\put(120,-5){\makebox(0,0)[t]{$6$}}
\put(65,20){\makebox(0,0)[l]{$7$}}
\end{picture}
\\
&
\\
$E_8^{(1)}$:&
\begin{picture}(146,40)(-5,-5)
\multiput(0,0)(20,0){8}{\circle{6}}
\put(100,20){\circle{6}}
\multiput(3,0)(20,0){7}{\line(1,0){14}}
\put(100, 3){\line(0,1){14}}
\put( 0,-5){\makebox(0,0)[t]{$0$}}
\put(20,-5){\makebox(0,0)[t]{$1$}}
\put(40,-5){\makebox(0,0)[t]{$2$}}
\put(60,-5){\makebox(0,0)[t]{$3$}}
\put(80,-5){\makebox(0,0)[t]{$4$}}
\put(100,-5){\makebox(0,0)[t]{$5$}}
\put(120,-5){\makebox(0,0)[t]{$6$}}
\put(140,-5){\makebox(0,0)[t]{$7$}}
\put(105,20){\makebox(0,0)[l]{$8$}}
\end{picture}
\\
&
\\
$F_4^{(1)}$:&
\begin{picture}(86,20)(-5,-5)
\multiput( 0,0)(20,0){5}{\circle{6}}
\multiput( 3,0)(20,0){2}{\line(1,0){14}}
\multiput(42.85,-1)(0,2){2}{\line(1,0){14.3}} 
\put(63,0){\line(1,0){14}}
\put(50,0){\makebox(0,0){$>$}}
\put( 0,-5){\makebox(0,0)[t]{$0$}}
\put(20,-5){\makebox(0,0)[t]{$1$}}
\put(40,-5){\makebox(0,0)[t]{$2$}}
\put(60,-5){\makebox(0,0)[t]{$3$}}
\put(80,-5){\makebox(0,0)[t]{$4$}}
\end{picture}
\\
&
\\
$G_2^{(1)}$:&
\begin{picture}(46,20)(-5,-5)
\multiput( 0,0)(20,0){3}{\circle{6}}
\multiput( 3,0)(20,0){2}{\line(1,0){14}}
\multiput(22.68,-1.5)(0,3){2}{\line(1,0){14.68}}
\put( 0,-5){\makebox(0,0)[t]{$0$}}
\put(20,-5){\makebox(0,0)[t]{$1$}}
\put(40,-5){\makebox(0,0)[t]{$2$}}
\put(30,0){\makebox(0,0){$>$}}
\end{picture}
\\
&
\\
\end{tabular}
\end{table}
\unitlength=1pt

\subsection{Modular S-matrix}
Let $k\ge 1$ be an integer. For a pair of weights $\lambda, \mu \in P$, we consider the following quantity
\begin{equation}
S_{\hat{\lambda},\hat{\mu}}=\frac{i^{|\Delta_{+}|}}{\sqrt{|P/Q^{\vee}|(k+h^{\vee})^r}} \sum_{w\in W} (-1)^{\ell(w)}\exp \left(-{\frac{2\pi i ( w(\lambda+\rho)|\mu+\rho)}{k+h^{\vee}}}\right)
\label{modularS}
\end{equation}
where $\hat{\lambda}$ and $\hat{\mu}$ are the level $k$ affinizations of $\lambda$ and $\mu$, respectively. Note that $S_{\hat{\lambda},\hat{\mu}}=S_{\hat{\mu},\hat{\lambda}}$. 

We call the matrix $S=(S_{\hat{\lambda}, \hat{\mu}})_{\hat{\lambda},\hat{\mu}\in \hat{P}_{+}^{k}}$ the \textit{modular $S$-matrix}. It appears when one tries to describe the modular transformation properties of the characters of the integrable highest weight representations of level $k$ over the affine Kac-Moody algebras \cite{MR750341}. Let us review some of their properties.

For the modular $S$-matrix, we have the following orthogonality relation
\begin{equation}\label{orthoS}
SS^{\dagger}=I_n
\end{equation}
where $S^{\dagger}$ denotes the transpose of the complex conjugate of $S$ and $I_n$ is the identity matrix of size $n=|\hat{P}_{+}^{k}|$. In other words, $S$ is a unitary matrix.  

The modular $S$-matrix satisfies various symmetries which will be heavily used in the following sections. The shifted action of the affine Weyl group gives 
\begin{equation}\label{shiftWS}
S_{w\cdot \hat{\lambda}, \hat{\mu}}=(-1)^{\ell(w)}S_{\hat{\lambda},\hat{\mu}}
\end{equation}
for $w\in \widehat{W}$. Let $O(\hat{\mathfrak{g}})$ be the outer automorphism group of $\hat{\mathfrak{g}}$ consisting of some diagram automorphisms of the extended Dynkin diagram. See Table \ref{autext} for a description.
We use the standard notation for permutations and cycles to represent its elements. For example, for the permutation
$$ 
\tau=\left(
\begin{array}{ccccc}
0 &1 &\cdots & {r-1} &{r} \\
\tau(0) &\tau(1) &\cdots & \tau(r-1) & \tau(r) \\
\end{array}
\right),
$$
we define $\tau\hat{\omega}_a:=\hat{\omega}_{\tau(a)}$ for each $a\in \hat{I}$. This also gives their action on $\hat{P}$. The cycle $(i \quad j \quad \cdots \quad k)$ sends $\hat{\omega}_i$ to $\hat{\omega}_j$ and $\hat{\omega}_k$ to $\hat{\omega}_i$. Note that $\tau\in O(\hat{\mathfrak{g}})$ can be uniquely determined by the image $\tau\hat{\omega}_0$ of $\hat{\omega}_0$. For $\tau \in O(\hat{\mathfrak{g}})$, we have
\begin{equation}\label{outAutS}
S_{\tau \hat{\lambda}, \hat{\mu}}=S_{\hat{\lambda}, \hat{\mu}}e^{-2\pi i (\tau \hat{\omega}_0|\mu)}.
\end{equation}
Thus the action of $O(\hat{\mathfrak{g}})$ results in the multiplication of a certain root of unity on the entries of $S$-matrix. 

\begin{table}
\caption{Generators of the outer automorphism group $O(\hat{\mathfrak{g}})$}
\begin{center}
\label{autext}
\begin{tabular}{c|c|c}
$\mathfrak{g}$ & $O(\hat{\mathfrak{g}})$ & generators \\
\hline
$A_r$ & $\mathbb{Z}_{r+1}$ & 
$(0\quad r\quad r-1\quad\cdots \quad 1)$
\\
\hline
$B_r$ & $\mathbb{Z}_2$ &
$
(0\quad 1)
$
\\
\hline
$C_r$ & $\mathbb{Z}_2$ & 
$
\left(
\begin{array}{ccccc}
0 &1 &\cdots & {r-1} &{r}\\
{r} &{r-1} &\cdots & {1} &{0}
\end{array}
\right)
$
\\
\hline
$D_{r}$, $r$ even & $\mathbb{Z}_2\times \mathbb{Z}_2$ &
$(0\quad 1)(r-1\quad r)$,

$
\left(
\begin{array}{cccccc}
0 &1 &2 &\cdots & {r-1} &{r} \\
{r} & {r-1} & {r-2}& \cdots & {1} &{0}
\end{array}
\right)
$
\\
\hline
$D_{r}$ $r$ odd & $\mathbb{Z}_4$ & 
$
\left(
\begin{array}{cccccc}
0 &1 &2 &\cdots & {r-1} &{r} \\
{r-1} & {r} & {r-2}& \cdots & {1} &{0}
\end{array}
\right)
$
\\
\hline
$E_{6}$ & $\mathbb{Z}_3$ & 
$
\left(
\begin{array}{ccccccc}
0 &1 & 2 & 3 & 4 & 5 & {6} \\
{1} &{5} &{4} &{3} &{6} &{0} &{2}
\end{array}
\right)
$
\\
\hline
$E_{7}$ & $\mathbb{Z}_2$ &
$
\left(
\begin{array}{cccccccc}
0 &1 & 2 & 3 & 4 & 5 & {6}& {7} \\
{6} &{5} &{4} &{3} &{2} &{1} &{0} &{7}
\end{array}
\right)
$
\end{tabular}
\end{center}
\end{table}
For a finite weight $\lambda\in P$, let us consider the conjugate weight $\lambda^{*}:=-w_0 \lambda \in P$ where $w_0$ is the longest element of the finite Weyl group $W$. The conjugate weight $\hat{\lambda}^{*}$ of the affine weight $\hat{\lambda}$ is defined to be the level $k$ affinization  of $\lambda^{*}\in P$. This conjugation can be regarded as a diagram automorphism of the extended Dynkin diagram preserving the vertex $0$. For concreteness, we give their action in Table \ref{longB} using the permutation notation. For $\hat{\lambda},\hat{\mu}\in \hat{P}_{+}^{k}$, we have
\begin{equation}\label{longestconj}
S_{\hat{\lambda}^{*},\hat{\mu}}=S^{*}_{\hat{\lambda},\hat{\mu}}.
\end{equation}

Let us also recall some properties of the affine Weyl group $\widehat{W}$. For $\lambda\in P$ and its image $\lambda' \in P$ under the reflection through the affine hyperplane $H_{\alpha,n}=\{x\in \mathfrak{h_{\mathbb{R}}^{*}}:(\alpha|x)=n\}$ where $n$ is an integer, we can find an odd element $w\in \widehat{W}$ such that
\begin{equation}\label{refhyp}
\hat{\lambda'}=w\hat{\lambda}\in \hat{P}^n.
\end{equation}

For $\hat{\lambda}\in \hat{P}^{k}$, we define the \textit{quantum dimension} $\mathcal{D}_{\hat{\lambda}}$ by
\begin{equation}
\mathcal{D}_{\hat{\lambda}} : =
\frac{S_{\hat{\lambda}, \hat{0}}}{S_{\hat{0}, \hat{0}}}
=\frac{\prod_{\alpha\in \Delta_{+}}\sin \frac{\pi(\lambda+\rho|\alpha)}{k+h^{\vee}}}{\prod_{\alpha\in \Delta_{+}}\sin \frac{\pi (\rho|\alpha)}{k+h^{\vee}}} \label{quanProd}
\end{equation}
where the equality can be justified by the Weyl character formula.

\begin{prop}\label{pro:quantumD}
Let $\hat{\lambda}\in \hat{P}^{k}$. The following conditions are equivalent :
\begin{enumerate}[label=(\roman{*}), ref=(\roman{*})]

\item \label{itm:pro:quantumD1} $\mathcal{D}_{\hat{\lambda}}=0$
\item \label{itm:pro:quantumD2} there exists $w\in \widehat{W}$ of odd signature such that $w\cdot \hat{\lambda}=\hat{\lambda}$
\item \label{itm:pro:quantumD3} $S_{\hat{\lambda}, \hat{\mu}}=0$ for all $\hat{\mu}\in \hat{P}_{+}^{k}$
\end{enumerate}
\end{prop}
\begin{proof}
Let us prove \ref{itm:pro:quantumD1}$\Rightarrow$\ref{itm:pro:quantumD2}. Assume that $\mathcal{D}_{\hat{\lambda}}=0$. From (\ref{quanProd}), we can find $\alpha\in \Delta$ such that $(\lambda+\rho|\alpha)$ is $n(k+h^{\vee})$ for some integer $n$. This implies that
$\lambda+\rho$ is fixed under the reflection through the affine hyperplane $H_{\alpha,n(k+h^{\vee})}=\{x\in \mathfrak{h_{\mathbb{R}}^{*}}:(\alpha|x)=n(k+h^{\vee})\}$. 
By (\ref{refhyp}), there exists $w\in \widehat{W}$ of odd signature such that $w\cdot \hat{\lambda}=\hat{\lambda}$.  \ref{itm:pro:quantumD2}$\Rightarrow$\ref{itm:pro:quantumD3} follows from (\ref{shiftWS}). \ref{itm:pro:quantumD3}$\Rightarrow$\ref{itm:pro:quantumD1} is obvious from (\ref{quanProd}).
\end{proof}

We also note that if $\mathcal{D}_{\hat{\lambda}}\neq 0$ for $\hat{\lambda}\in \hat{P}^{k}$, then we can find a unique element $\hat{\lambda}'\in \hat{P}_{+}^{k}$ such that
\begin{equation}\label{alcoverep}
\hat{\lambda}'=w\cdot \hat{\lambda}
\end{equation}
for some $w\in \widehat{W}$. If $\hat{\lambda}\in \hat{P}_{+}^{k}$, then $\mathcal{D}_{\hat{\lambda}}>0$ from (\ref{quanProd}) and thus 
\begin{equation}\label{Sneq0}
S_{\hat{\lambda},\hat{0}}=\mathcal{D}_{\hat{\lambda}}S_{\hat{0}, \hat{0}}\neq 0.
\end{equation}
\subsection{WZW fusion ring}\label{WZWf}
The \textit{WZW fusion ring} $\operatorname{Fus}_{k}(\mathfrak{g})$ is a free $\mathbb{Z}$-module equipped with the basis $\{V_{\hat{\lambda}} :\hat{\lambda} \in \hat{P}_{+}^{k}\}$ with a certain product structure on it, called the \textit{fusion product}, given by
$$V_{\hat{\lambda}}\cdot V_{\hat{\mu}}=\sum_{\hat{\nu}\in \hat{P}_{+}^{k}}N_{\hat{\lambda} \hat{\mu}}^{\hat{\nu}}V_{\hat{\nu}}$$
where $N_{\hat{\lambda} \hat{\mu}}^{\hat{\nu}}$ is the dimension of the conformal block $V_{\mathbb{P}^1}(\hat{\lambda},\hat{\mu},\hat{\nu}^{*})$ on $\mathbb{P}^1$
; see \cite{MR1360497, MR2499554} and the references therein. As a ring, it is commutative and associative with unity $V_{k \hat{\omega }_0}$. We call $N_{\hat{\lambda} \hat{\mu}}^{\hat{\nu}}$ the \textit{fusion coefficient}. The Verlinde formula 
\begin{equation}\label{Verlinde}
N_{\hat{\lambda} \hat{\mu}}^{\hat{\nu}}=\sum_{\hat{\omega} \in \hat{P}_{+}^{k}}\frac{S_{\hat{\lambda},\hat{\omega}}S_{\hat{\mu},\hat{\omega}}S^{*}_{\hat{\nu},\hat{\omega}}}{S_{\hat{0},\hat{\omega}}}, 
\end{equation}
relates the fusion coefficients with the modular $S$-matrix.

Recall that the Grothendieck ring $\operatorname{Rep}\mathfrak{g}$ of finite-dimensional representations of $\mathfrak{g}$ is a free $\mathbb{Z}$-module with the basis $\{V_{\omega} :\omega \in P_{+}\}$, where $V_{\omega}$ denotes the isomorphism class of an irreducible highest weight module of highest weight ${\omega}$. Here the product structure is given by the tensor product of the corresponding representations. There exists a surjective ring homomorphism $\beta_k : \operatorname{Rep}\mathfrak{g}\to \operatorname{Fus}_{k}(\mathfrak{g})$ defined by :
\begin{equation}
\beta_k(V_{\lambda}):=
\begin{cases} 
0 & \text{if } \mathcal{D}_{\hat{\lambda}}=0\\ 
(-1)^{\ell(w)}V_{\hat{\lambda}'} & \text{if } \mathcal{D}_{\hat{\lambda}}\neq 0
\end{cases}\label{Valcove}
\end{equation}
for $\lambda\in P_{+}$. Here $\hat{\lambda}' \in \hat{P}_{+}^{k}$ denotes the element in the fundamental alcove as in (\ref{alcoverep}); see \cite{MR1360497, MR2499554} for more on this map. In short, to find the image of $\beta_{k}$, we need to move the dominant integral weight in the fundamental Weyl chamber into the fundamental Weyl alcove by using the affine Weyl group and count the number of simple reflections necessary to achieve it.

\begin{table}
\caption{The diagram automorphism corresponding to $*$}
\begin{center}
\label{longB}
\begin{tabular}{c|c}
$\mathfrak{g}$ & $*$ 
\\
\hline
$A_r$ & 
$
\left(
\begin{array}{cccc}
 0 &  1 & \cdots &  r \\
 0 &  r & \cdots &  1 \\
\end{array}
\right)
$ 
\\
\hline
$D_{r}$, $r$ odd & $({r-1}\quad r)$
\\
\hline
$E_{6}$ & 
$
(1\quad 5)(2\quad 4)
$
\\
\hline
otherwise & trivial
\end{tabular}
\end{center}
\end{table}
There exists an involution $*:\operatorname{Fus}_{k}(\mathfrak{g})\to \operatorname{Fus}_{k}(\mathfrak{g})$ given by 
$$
V^{*}_{\hat{\omega}}:=V_{\hat{\omega}^{*}},
$$
which satisfies the following commutative diagram
\begin{center}
\begin{tikzcd}[row sep = 9ex, column sep = 20ex, ampersand replacement=\&]
\operatorname{Rep}\mathfrak{g}
    \arrow{r}{\beta_k} 
    \arrow[swap]{d}{*}
\& 
\operatorname{Fus}_{k}(\mathfrak{g}) 
    \arrow{d}{*}  \\
\operatorname{Rep} \mathfrak{g}
    \arrow[swap]{r}{\beta_k} 
\& 
\operatorname{Fus}_{k}(\mathfrak{g})
\end{tikzcd}
\end{center}
The involution $*:\operatorname{Rep}\mathfrak{g}\to \operatorname{Rep}\mathfrak{g}$ is given by $V_{\lambda}^{*}=V_{\lambda^{*}}$.
We can also define the action of $O(\hat{\mathfrak{g}})$ on $\operatorname{Fus}_{k}(\mathfrak{g})$ by
$$
\tau V_{\hat{\omega}}:=V_{\tau \hat{\omega}},\, \tau\in O(\hat{\mathfrak{g}}).
$$

\begin{definition}
Let $V=\sum_{\hat{\lambda}\in \hat{P}_{+}^{k}}Z_{\hat{\lambda}}V_{\hat{\lambda}}\in \operatorname{Fus}_{k}(\mathfrak{g})$. If $Z_{\hat{\lambda}}\ge 0$ for all $\hat{\lambda}\in \hat{P}_{+}^{k}$, then we call $V$ \textit{non-negative}. If $V$ is non-negative with at least one $Z_{\hat{\lambda}}$ nonzero, then we call $V$ \textit{positive}. We define \textit{non-positive} and \textit{negative} elements in a similar way.
\end{definition}

\begin{definition}
For $\hat{\lambda},\hat{\mu}\in \hat{P}_{+}^{k}$, we define the \textit{generalized quantum dimension} of $V_{\hat{\lambda}}\in \operatorname{Fus}_{k}(\mathfrak{g})$ by
\begin{equation}\label{gqdim}
\operatorname{qdim}_{\hat{\mu}}V_{\hat{\lambda}}:=\frac{S_{\hat{\lambda}, \hat{\mu}}}{S_{\hat{0},\hat{\mu} }}
\end{equation}
For $V\in \operatorname{Fus}_{k}(\mathfrak{g})$, we define  $\operatorname{qdim}_{\hat{\mu}}V$ by extending (\ref{gqdim}) linearly. 
For $\hat{\mu}=\hat{0}=k\hat{\omega}_0$, we will use the notation 
$$
\operatorname{qdim}_{\hat{0}}V:=\operatorname{qdim}V.
$$
\end{definition}
Let us summarize various properties of the generalized quantum dimensions, many of which follow from the properties of the modular $S$-matrix.

\begin{prop}\label{prop:qdimmu}
Let $\hat{\mu}\in \hat{P}_{+}^{k}$, $\tau\in O(\hat{\mathfrak{g}})$ and $V\in \operatorname{Fus}_{k}(\mathfrak{g})$. The following properties hold :
\begin{enumerate}[label=(\roman{*}), ref=(\roman{*})]

\item \label{itm:hom} $\operatorname{qdim}_{\hat{\mu}}: \operatorname{Fus}_{k}(\mathfrak{g})\to \mathbb{C}$ is a homomorphism,
\item \label{itm:pos} If $V$ is positive, then $\operatorname{qdim}V>0$,
\item \label{itm:cpxconj} $\operatorname{qdim}_{\hat{\mu}}V^{*}=(\operatorname{qdim}_{\hat{\mu}} V)^{*}$,
\item \label{itm:rtunity}$\operatorname{qdim}_{\hat{\mu}} V_{k(\tau \hat{\omega}_0)}=e^{-2\pi i (\tau \hat{\omega}_0|\mu)}$,
\item \label{itm5:prop:qdimmu}$\operatorname{qdim}_{\hat{\mu}}\tau V=(\operatorname{qdim}_{\hat{\mu}} V_{k(\tau \hat{\omega}_0)})(\operatorname{qdim}_{\hat{\mu}} V)$,
\item \label{itm4:prop:qdimmu} $V=0$ if and only if $\operatorname{qdim}_{\hat{\mu}}V=0$ for all $\hat{\mu}\in \hat{P}_{+}^{k}$.
\end{enumerate}
\end{prop}
\begin{proof}
\ref{itm:hom} follows from (\ref{orthoS}) and (\ref{Verlinde}); see \cite[Proposition 9.4]{MR1360497} also.
\ref{itm:pos} follows from (\ref{quanProd}). \ref{itm:cpxconj} is a consequence of (\ref{longestconj}). 
For \ref{itm:rtunity} and \ref{itm5:prop:qdimmu}, we can use (\ref{outAutS}).
If $V=\sum_{\hat{\lambda}\in \hat{P}_{+}^{k}}Z_{\hat{\lambda}}V_{\hat{\lambda}}$, then $$S_{\hat{0}, \hat{\mu}}\cdot \operatorname{qdim}_{\hat{\mu}}V=\sum_{\hat{\lambda}\in \hat{P}_{+}^{k}}Z_{\hat{\lambda}}S_{\hat{\lambda}, \hat{\mu}}.$$ Then \ref{itm4:prop:qdimmu} follows from (\ref{orthoS}).
\end{proof}

The above proposition implies that for $\tau \in O(\hat{\mathfrak{g}})$ and $V\in \operatorname{Fus}_{k}(\mathfrak{g})$,
\begin{equation}\label{autmult}
\tau V=V_{k(\tau \hat{\omega}_0)}\cdot V,
\end{equation}
which allows us to interpret the action of $O(\hat{\mathfrak{g}})$ in $\operatorname{Fus}_{k}(\mathfrak{g})$ as multiplications by certain invertible elements of $\operatorname{Fus}_{k}(\mathfrak{g})$.

\begin{definition}
For $V\in \operatorname{Fus}_{k}(\mathfrak{g})$, let $M_{V}=(m_{\hat{\lambda} \hat{\mu}})_{\hat{\lambda},\hat{\mu}\in \hat{P}_{+}^{k}}$ be the matrix defined by
$$
V\cdot V_{\hat{\lambda}}=\sum_{\hat{\mu}\in \hat{P}_{+}^{k}}m_{\hat{\lambda} \hat{\mu}}V_{\hat{\mu}}.
$$
We call $M_{V}$ the \textit{fusion matrix} of $V$. 
\end{definition}
Statements on positive elements in $\operatorname{Fus}_{k}(\mathfrak{g})$ can be translated into the corresponding ones on their fusion matrices as follows :
\begin{prop}\label{Perron-Frobenius}
Let $V\in \operatorname{Fus}_{k}(\mathfrak{g})$ be positive. Then  
\begin{enumerate}[label=(\roman{*}), ref=(\roman{*})]

\item $M_V$ is an integral matrix with non-negative entries,
\item $\operatorname{qdim}_{\hat{\mu}}V$ is an eigenvalue of $M_{V}$ with an eigenvector $(\operatorname{qdim}_{\hat{\mu}} V_{\hat{\lambda}})_{\hat{\lambda}\in \hat{P}_{+}^{k}}$,
\item $\operatorname{qdim} V$ is the Perron-Frobenius eigenvalue of $M_V$.
\end{enumerate}
\end{prop}
\begin{proof}
It follows from the fact that the fusion coefficient $N_{\hat{\lambda} \hat{\mu}}^{\hat{\nu}}$ is a non-negative integer and Proposition \ref{prop:qdimmu}.
\end{proof}

\subsection{$Q$-systems and Kirillov-Reshetikhin modules}\label{subsec:QKR}
\begin{definition}
Let $R$ be a commutative ring with unity. Let $\mathbf{Q}=\{Q^{(a)}_{m}\}_{(a,m)\in H}$ be a family of elements in $R$ satisfying
\begin{equation}\label{eq:qsys}
\left(Q^{(a)}_{m}\right)^2 = Q^{(a)}_{m-1}Q^{(a)}_{m+1} + 
\prod_{b: b \sim a} \prod_{j=0}^{-C_{a b}-1}
Q^{(b)}_{\lfloor\frac{C_{b a}m -j}{C_{a b}}\rfloor}, \quad (a,m)\in H.
\end{equation}
We call (\ref{eq:qsys}) the \textit{unrestricted $Q$-system of type $\mathfrak{g}$}
and $\mathbf{Q}$ a \textit{solution} of it. Here $b \sim a$ means $C_{ab}<0$. We assume $Q^{(a)}_{-1} =0$ for all $a\in I$.
\end{definition}
When the Dynkin diagram of $\mathfrak{g}$ is simply-laced, (\ref{eq:qsys}) is of the form
$$
\left(Q^{(a)}_{m}\right)^2=Q^{(a)}_{m-1}Q^{(a)}_{m+1}+\prod _{b:b \sim a} Q^{(b)}_{m}.
$$

Let $q$ be a non-zero complex number which is not a root of unity. The finite-dimensional irreducible representations of type 1 over the quantum affine algebra $U_{q}(\hat{\mathfrak{g}})$ are classified by the set of $I$-tuples $\mathbf{P}=(P_i)_{i\in I}$ of polynomials $P_i\in \mathbb{C}[z]$, with $P_i(0)=1$ as shown in  \cite{MR1357195} and they are called Drinfeld polynomials. The Grothendieck ring $\operatorname{Rep} U_q(\hat{\mathfrak{g}})$ of finite-dimensional representations of $U_{q}(\hat{\mathfrak{g}})$ is a free $\mathbb{Z}$-module with the basis $\{V_{\mathbf{P}}:\mathbf{P}=(P_i)_{i\in I},P_i\in \mathbb{C}[z],\,P_i(0)=1\}$, where $V_{\mathbf{P}}$ denotes the simple module associated to $\mathbf{P}$. It is a commutative ring; see, for example, \cite[Corollary 2]{frenkel1999q}.

There exists a special class of finite-dimensional irreducible modules of $U_{q}(\hat{\mathfrak{g}})$ called the Kirillov-Reshetikhin (KR) modules. A KR module is associated with Drinfeld polynomials $\mathbf{P}=(P_i)_{i\in I}$ of the form
$$
P_i(z) =
\begin{cases} 
 \prod _{s=1}^m \left(1- z u q_{a}^{2(s-1)}\right), & \text{if $i=a$}\\
 1, & \text{otherwise} \\ 
\end{cases}
$$
for some $(a,m)\in H$ and $u\in \mathbb{C}^{\times}$, where $q_{a} = q^{t/t_a}$ and $t=\max_{a\in I}t_a$ and is denoted by $W^{(a)}_{m,u}$. We can obtain a finite-dimensional $U_{q}(\mathfrak{g})$-module ${\rm res}\, W^{(a)}_{m,u}$ by restriction which allows us to drop the dependence on the spectral parameter $u$. From now on, we will drop $u$ when we write ${\rm res}\, W^{(a)}_{m,u}$. Although it is usual to write an isomorphism class of a finite-dimensional representation $V$ of $U_{q}(\hat{\mathfrak{g}})$ or $U_{q}(\mathfrak{g})$ as $[V]$, we will denote both of them by $V$ indiscriminately by abuse of notation as we will be mostly working with the Grothendieck ring in this paper.

As claimed in \cite{Kirillov1990} and proved in \cite{MR1993360, MR2254805}, the family 
$\{{\rm res}\, W^{(a)}_{m}\}_{(a,m)\in H}$ is a solution of (2.15) in $\operatorname{Rep}U_q(\mathfrak{g})$; Nakajima and Hernandez actually proved a stronger statement that the $q$-characters of KR modules satisfy the $T$-system whose specialization ignoring the spectral parameter by restriction is the $Q$-system.
In general, ${\rm res}\, W^{(a)}_m$ is not irreducible as a $U_{q}(\mathfrak{g})$-module and its decomposition into irreducibles gives
\begin{equation}\label{Qdecomp}
\operatorname{res} W^{(a)}_m=\sum_{\lambda \in P_{+}}Z(a, m,\lambda)V_{\lambda}
\end{equation}
for some non-negative integers $Z(a, m,\lambda)\in \mathbb{Z}$. In fact, $Z(a, m,m\omega_a)=1$ and $Z(a, m,\lambda)\neq 0$ only if $m\omega_a-\lambda$ can be written as a linear combination of $\alpha_1,\cdots,\alpha_r$ with non-negative coefficients.
See \cite[Appendix]{MR1745263} for a thorough treatment of this topic. We give an explicit description of (\ref{Qdecomp}) for all classical types in Appendix \ref{appQdec}. 

From what we will see later, it seems natural to separate and focus only on certain parts of a solution of $Q$-systems when it satisfies the boundary conditions $Q^{(a)}_{t_a k+1}=0$ for all $a\in I$. To deal with this situation, let us give the following definition.
\begin{definition}\label{levresQ}
Let $\mathbf{Q}=\{Q^{(a)}_{m}\}_{(a,m)\in H_k}$ be a family of elements in $R$ satisfying
\begin{equation}\label{eq:lresq}
\left(Q^{(a)}_{m}\right)^2 = Q^{(a)}_{m-1}Q^{(a)}_{m+1} + 
\prod_{b:b \sim a} \prod_{j=0}^{-C_{a b}-1}
Q^{(b)}_{\lfloor\frac{C_{b a}m - j}{C_{a b}}\rfloor}, \quad (a,m)\in H_k
\end{equation}
with $Q^{(a)}_{-1}=Q^{(a)}_{t_a k+1}=0$ for all $a\in I$. 
We call (\ref{eq:lresq}) \textit{the level $k$ restricted $Q$-system of type $\mathfrak{g}$} and $\mathbf{Q}$ a \textit{solution} of it.
\end{definition}

\section{Positivity and periodicity of $Q$-systems in $\operatorname{Fus}_{k}(\mathfrak{g})$}\label{pp}
\subsection{Main problem}
Now we can state the main problem in our study. For each $(a,m)\in H$, we want to determine the image $\beta_k(\operatorname{res}W^{(a)}_{m})$ of $W^{(a)}_{m}$ under the composition
$$
\operatorname{Rep} U_q(\hat{\mathfrak{g}})\xrightarrow{\operatorname{res}} \operatorname{Rep}U_q(\mathfrak{g}) \xrightarrow{\beta_k} \operatorname{Fus}_{k}(\mathfrak{g}),
$$
where we have used the known isomorphism between $\operatorname{Rep}U_q(\mathfrak{g})$ and $\operatorname{Rep}\mathfrak{g}$.

\begin{prop}\label{pr:betaim}
The family $\{\beta_k(\operatorname{res} W^{(a)}_{m})\}_{(a,m)\in H}$ is a solution of the unrestricted $Q$-system in $\operatorname{Fus}_{k}(\mathfrak{g})$. 
\end{prop}
\begin{proof}
This follows from the facts that $\{\operatorname{res} W^{(a)}_{m}\}_{(a,m)\in H}$ is a solution of the unrestricted $Q$-system and that $\beta_k$ is a ring homomorphism.
\end{proof}

Let us take a look at an example.
\begin{example}
Let $\mathfrak{g}=D_5$ and $k=4$. From (\ref{Domega1}), we can find
$$
\operatorname{res} W^{(2)}_{4}=V_{0}+V_{\omega_2}+V_{2\omega_2}+V_{3\omega_2}+V_{4\omega_2}.
$$

Let us compute $\beta_k(\operatorname{res} W^{(2)}_{4})$. For the weights $0,\omega_2,2\omega_2,3\omega_2$ and $4\omega_2$, the corresponding affine weights in $\hat{P}^{4}$ are 
$4\hat{\omega}_0, 2\hat{\omega}_0+\hat{\omega}_2, 2\hat{\omega}_2, -2\hat{\omega}_0+3\hat{\omega}_2$ and $-4\hat{\omega}_0+4\hat{\omega}_2$, respectively.
The shifted action of the affine Weyl group gives us
$$
\begin{aligned}
s_0\cdot (-2 \hat{\omega }_ 0+3 \hat{\omega }_ 2)&=2\hat{\omega}_2, \\ 
s_0\cdot (-4\hat{\omega}_0+4\hat{\omega}_2)&=2\hat{\omega}_0+\hat{\omega}_2.
\end{aligned}
$$
Thus 
$$
\begin{aligned}
\beta_k(\operatorname{res} W^{(2)}_{4})&=V_{4\hat{\omega}_0}+V_{2\hat{\omega}_0+\hat{\omega}_2}+V_{2\hat{\omega}_2}+(-1)V_{2\hat{\omega}_2}+(-1)V_{2\hat{\omega}_0+\hat{\omega}_2}\\
&=V_{4\hat{\omega}_0}
\end{aligned}
$$ from (\ref{Valcove}).
\end{example}
For $\hat{\mu}\in \hat{P}_{+}^{k}$, we will denote the generalized quantum dimension of $W^{(a)}_{m}$ by $$\mathcal{D}^{(a)}_{m,\hat{\mu}}:=\operatorname{qdim}_{\hat{\mu}}\beta_k(\operatorname{res} W^{(a)}_{m}).$$ Note that we get a solution
$$\{\mathcal{D}^{(a)}_{m,\hat{\mu}}\}_{(a,m)\in H}$$
of the unrestricted $Q$-system in $\mathbb{C}$ from Proposition \ref{prop:qdimmu}. For $\hat{\mu}=\hat{0}$, we will write $$\mathcal{D}^{(a)}_{m}:=\operatorname{qdim}\beta_k(\operatorname{res} W^{(a)}_{m}).$$ 

\subsection{Gauges of $Q$-systems}
To give a conjectural description of $\beta_k(\operatorname{res}W^{(a)}_{m})$ for all $(a,m)\in H$, we introduce a useful concept related $Q$-systems.  
\begin{definition}
If a family of elements $\mathbf{g}=\{g^{(a)}\}_{a\in I}$ in $R^{\times}$ satisfies
\begin{equation}\label{Qbdryeq}
\left(g^{(a)}\right)^2 =\prod_{b:b \sim a} \left(g^{(b)}\right)^{-C_{a b}} , \quad a\in I, 
\end{equation}
we call it a \textit{gauge} of the $Q$-system of type $\mathfrak{g}$. Note that (\ref{Qbdryeq}) can be rewritten as
$$
\prod_{b=1}^{r} \left(g^{(b)}\right)^{C_{a b}}=1.
$$
\end{definition}
Note that $\mathbf{1}=\{1\}_{a\in I}$ is a gauge. For two gauges $\mathbf{g}$ and $\mathbf{h}$, we can define the product $\mathbf{gh}$ as the component-wise product $\{g^{(a)}h^{(a)}\}_{a\in I}$, which is a gauge. For a gauge $\mathbf{g}=\{g^{(a)}\}_{a\in I}$, $\mathbf{g}^{-1}:=\{(g^{(a)})^{-1}\}_{a\in I}$ is also a gauge.
\begin{prop}
The set of gauges forms a group.
\end{prop}
\begin{proof}
This is trivial.
\end{proof}
For a gauge $\mathbf{g}=\{g^{(a)}\}_{a\in I}$ and a family $\mathbf{Q}=\{Q^{(a)}_{m}\}_{(a,m)\in H}$ in $R$, we let $\mathbf{g}\cdot \mathbf{Q}:=\{g^{(a)}Q^{(a)}_{m}\}_{(a,m)\in H}$. For $\mathbf{Q}=\{Q^{(a)}_{m}\}_{(a,m)\in H_k}$, we define $\mathbf{g}\cdot \mathbf{Q}$ in the same way. 
\begin{prop}\label{gauge}
Let $\mathbf{g}$ be a gauge of the $Q$-system. If $\mathbf{Q}$ is a solution of the unrestricted $Q$-system or the level $k$ restricted $Q$-system, then so is $\mathbf{g}\cdot\mathbf{Q}$. 

\end{prop}
\begin{proof}
This follows from (\ref{eq:qsys}), (\ref{eq:lresq}) and (\ref{Qbdryeq}) in a straightforward way.
\end{proof}

\begin{table}
\caption{$\hat{\tau}_a\in \hat{P}$ to describe a gauge}
\begin{center}
\label{taua}
\begin{tabular}{c|c}
$\mathfrak{g}$ & $\hat{\tau}_a $ \\
\hline
$A_r$ & $\hat{\omega}_a$ \\
\hline
$B_r$ & 
$\begin{cases} 
\hat{\omega}_1 & \text{if $a\equiv 1 \pmod 2$}\\ 
\hat{\omega}_0 & \text{if $a\equiv 0 \pmod 2$}
\end{cases}$
\\
\hline
$C_r$ & 
$\begin{cases} 
\hat{\omega}_0 & \text{if $1\le a \le r-1$}\\ 
\hat{\omega}_r & \text{if $a=r$}
\end{cases}$
\\
\hline
$D_r$ & 
$\begin{cases} 
\hat{\omega}_1 & \text{if $1\le a \le r-2$ and $a\equiv 1 \pmod 2$}\\ 
\hat{\omega}_0 & \text{if $1\le a \le r-2$ and $a\equiv 0 \pmod 2$}\\
\hat{\omega}_a & \text{if $a=r-1$ or $a=r$}
\end{cases}$
\\
\hline
$E_6$ & $\begin{array}{c|c|c|c|c|c|c}
a &1 & 2 & 3 & 4 & 5 & 6 \\
\hline
\hat{\tau}_a  & \hat{\omega}_1 & \hat{\omega}_5  & \hat{\omega}_0 & \hat{\omega}_1  & \hat{\omega}_5 & \hat{\omega}_0 
\end{array}$
\\
\hline
$E_7$ & $\begin{array}{c|c|c|c|c|c|c|c}
a& 1 & 2 & 3 & 4 & 5 & 6 & 7\\
\hline
\hat{\tau}_a & \hat{\omega}_0 & \hat{\omega}_0  & \hat{\omega}_0 & \hat{\omega}_6 & \hat{\omega}_0 & \hat{\omega}_6 & \hat{\omega}_6
\end{array}$
\\
\hline
$E_8,F_4,G_2$ & $\hat{\omega}_0$
\end{tabular}
\end{center}
\end{table}

\begin{lemma}\label{checkunitA}
Let $\{\hat{\tau}_{a}\}_{a\in I}$ be as in Table \ref{taua}. Then we have
\begin{equation}
\sum_{b\in I} C_{a b}\pi(\hat{\tau}_b) \in Q^{\vee}, \quad a\in I. \label{checkunitB}
\end{equation}
\begin{proof}
We list the conditions (\ref{checkunitB}) to be checked for each type explicitly. 

For type $A_r$, we see that $\sum_{b\in I} C_{a b} \omega_b=\alpha_a,\,a\in I$, which is in $Q=Q^{\vee}$.

For type $B_r$, we only have to check that $2\omega_1\in Q^{\vee}$. 

For type $C_r$, the only condition we need to check is $2\omega_r\in Q^{\vee}$.

For type $D_{r}$, $r$ odd, we have the following conditions
$$
\begin{cases} 
2\omega_{1}\in Q^{\vee}\\ 
\omega_{r-1}+\omega_{r}-2\omega_{1}\in Q^{\vee}\\
2\omega_{r-1}-\omega_{1}\in Q^{\vee} \\
2\omega_{r}-\omega_{1}\in Q^{\vee}
\end{cases}.
$$

For type $D_{r}$, $r$ even, we need to check
$$
\begin{cases} 
2\omega_{1}\in Q^{\vee}\\ 
\omega_{1}+\omega_{r-1}+\omega_{r}\in Q^{\vee}\\
2\omega_{r-1}\in Q^{\vee} \\
2\omega_{r}\in Q^{\vee}
\end{cases}.
$$

For type $E_{6}$, the conditions to be checked are given by
$$
\begin{cases} 
2\omega_{1}-\omega_{5} \in Q^{\vee} \\
2\omega_{5}-\omega_{1} \in Q^{\vee} \\
\omega_{1}+\omega_{5}\in Q^{\vee} \\
\end{cases}.
$$

For type $E_{7}$, we only have to check  $2{\omega}_6\in Q^{\vee}$.

These can all be verified by a straightforward calculation.
\end{proof}
\end{lemma}

\begin{prop}\label{boundarysol}
Let $\{\hat{\tau}_{a}\}_{a\in I}$ be as in Table \ref{taua}. Then $\{V_{k\hat{\tau}_a}\}_{a\in I}$ is a gauge of the $Q$-system in $\operatorname{Fus}_{k}(\mathfrak{g})$.
\end{prop}

\begin{proof}
For types $E_8, F_4$ and $G_2$, it is trivial since $V_{k\hat{\tau}_a}$ is the identity in $\operatorname{Fus}_{k}(\mathfrak{g})$ for any $a\in I$. 

By Proposition \ref{prop:qdimmu} \ref{itm4:prop:qdimmu}, we can show that $\{V_{k \hat{\tau}_a}\}_{a\in I}$ is a gauge of the $Q$-system by proving 
\begin{equation}\label{qbd}
\prod_{b\in I} \left(\operatorname{qdim}_{\hat{\mu}}V_{k \hat{\tau}_b}\right)^{C_{a b}}=1
\end{equation}
for any $a\in I$ and $\hat{\mu}\in\hat{P}_{+}^{k}$.
As
$$\operatorname{qdim}_{\hat{\mu}}V_{k \hat{\tau}_a}=e^{-2\pi i (\hat{\tau}_a | \mu)}$$
from Proposition \ref{prop:qdimmu} \ref{itm:rtunity}, (\ref{qbd}) follows from Lemma \ref{checkunitA}.
\end{proof}

\begin{prop}\label{boundarysol2}
Let $\mathbf{\sigma}=\{\sigma_{a}\}_{a\in I}$ be as in Table \ref{sigmaa}. Then $\mathbf{\sigma}$ is a gauge of the $Q$-system in a commutative ring $R$. 
\end{prop}

\begin{proof}
Note that $\sigma_a=e^{-2\pi i (\hat{\tau}_a |\rho)}$ for $\hat{\tau}_{a}\in \hat{P}, \,a\in I$ in Table \ref{taua}. For each $a\in I$, we have
$$
\prod_{b\in I} \sigma_b^{C_{a b}}=\prod_{b\in I} e^{-2\pi i (C_{a b}\hat{\tau}_b |\rho)}=e^{-2\pi i (\sum_{b\in I} C_{a b}\hat{\tau}_b |\rho)}=1,
$$
which follows from Lemma \ref{checkunitA}.
\end{proof}

\begin{table}
\caption{Gauge $\mathbf{\sigma}=\{\sigma_a\}_{a\in I}$}
\begin{center}
\label{sigmaa}
\begin{tabular}{c|c}
$\mathfrak{g}$ & $\sigma_a$   \\
\hline
$A_r$ & $-1$ if $a$ and $r$ are both odd
\\
\hline
$B_r$ & $-1$ if $a$ is odd
\\
\hline
$C_r$ & $-1$ if $r\equiv 1,2 \pmod 4$ and $a=r$
\\
\hline
$D_r$ & $-1$ if $r\equiv 2,3 \pmod 4$ and $a=r, r-1$
\\
\hline
$E_7$ & $-1$ if $a=4,6,7$
\\
\hline
otherwise & 1
\end{tabular}
\end{center}
\end{table}

\subsection{Conjecture on the image of KR modules in the fusion ring}
We begin with a simple observation about $\beta_k(\operatorname{res} W^{(a)}_{m})$ for small $m\geq 0$.

Recall (\ref{Qdecomp}) that 
$$
\operatorname{res} W^{(a)}_m=\sum_{\lambda \in P_{+}}Z(a, m,\lambda)V_{\lambda}.
$$ 
\begin{prop}\label{phalf}
If $0\le m \le \lfloor\frac{k}{c_a}\rfloor$, then $\beta_k(\operatorname{res} W^{(a)}_{m})$ is positive. If $m \geq \lfloor\frac{k}{c_a}\rfloor+1$, then there exists $\lambda\in P_{+}$ with $Z(a, m,\lambda)\neq 0$ such that its corresponding affine weight $\hat{\lambda}\in  \hat{P}^k$ is not dominant integral, i.e., $\hat{\lambda}\notin \hat{P}_{+}^k$. 
\end{prop}
\begin{proof}
This is essentially \cite[Corollary 3.3]{MR3282650}. Here we give a simplified proof. For $\lambda=\sum_{i=1}^{r}\lambda_{i}\omega_{i}\in P_{+}$, the corresponding affine weight $\hat{\lambda}$ is given by $\sum_{i=0}^{r}\lambda_{i}\hat{\omega}_{i}$ with $\lambda_0=k-(\lambda|\theta)$. As $(\theta|\alpha_i)\geq 0$ for any $i\in I$, we have 
$$mc_a=(m\omega_a|\theta)\geq (\lambda|\theta) = k-\lambda_0$$ for any $\lambda$ with $Z(a,m,\lambda)\neq 0$.

This inequality shows that if $0\le m \le \lfloor\frac{k}{c_a}\rfloor$, then $\lambda_0\geq 0$ and hence $\hat{\lambda}\in \hat{P}_{+}^k$ for all $\lambda\in P_{+}$ with $Z(a, m,\lambda)\neq 0$. When $m \geq \lfloor\frac{k}{c_a}\rfloor+1$, $\lambda = m \omega_a$ gives $\hat{\lambda} = \lambda_0 \hat{\omega}_0 + m \hat{\omega}_a$ where $\lambda_0 = k-mc_a<0$. This proves the proposition.
\end{proof}

This shows that when $m\geq \lfloor\frac{k}{c_a}\rfloor+1$, we have to implement (2.12) and carry out the necessary computation to get $\beta_k(\operatorname{res} W^{(a)}_{m})$. As we eventually want to know the positivity for $\beta_k(\operatorname{res}W^{(a)}_{m})$ for $0\le m \le t_a k$, we see that the number $c_at_a$ roughly measures the difficulty of the problem for each $a\in I$. Here's the table of $\max_{a\in I} c_at_a$ for each type :
$$
\begin{array}{c|c|c|c|c|c|c|c|c|c|c}
  & A_r & B_r & C_r & D_{r} & E_6 & E_7 & E_8 & F_4 & G_2 \\
\hline
\max_{a\in I} c_at_a & 1 & 2 & 2 & 2 & 3 & 4 & 6 & 4 & 3
\end{array}.
$$

Now we state the conjecture on $\beta_k(\operatorname{res}W^{(a)}_{m})$.
\begin{conjecture}\label{pconj}
For $a\in I$, let $\hat{\tau}_{a}\in \hat{P}$ be as in Table \ref{taua} and $\sigma_{a}=e^{-2\pi i (\hat{\tau}_a|\rho)}$. 
The following properties hold :
\begin{enumerate}[label=(\roman{*}), ref=(\roman{*})]

\item \label{pconj1} $\beta_k(\operatorname{res}W^{(a)}_{m})$ is positive for $0\le m \le t_a k$,
\item \label{pconj2} $\beta_k(\operatorname{res}W^{(a)}_{t_ak-m})=V_{k \hat{\tau}_a}\left(\beta_k(\operatorname{res}W^{(a)}_{m})\right)^{*}$ for $0\le m \le t_a k$,
\item \label{pconj4} $\beta_k(\operatorname{res}W^{(a)}_{t_ak+1})=\beta_k(\operatorname{res}W^{(a)}_{t_ak+2})=\cdots =\beta_k(\operatorname{res}W^{(a)}_{t_a(k+h^{\vee})-1})=0$,
\item \label{pconj5} $\beta_k(\operatorname{res}W^{(a)}_{m+nt_a(k+h^{\vee})})=\sigma_a^{n}V_{k \hat{\tau}_a}^n \beta_k(\operatorname{res}W^{(a)}_{m})$ for $0 \le m \le t_a(k+h^{\vee})-1$ and $n\in \mathbb{Z}_{\ge 0}$.
\end{enumerate}
\end{conjecture}
For the values of $\sigma_{a}=e^{-2\pi i (\hat{\tau}_a|\rho)}$, see Table \ref{sigmaa}. The conjecture is supported by many symbolic calculations involving the affine Weyl group. In Section \ref{perMS}, we will give a proof of Conjecture \ref{pconj} for some special cases; see Theorem \ref{simplepconj} for the statement. 
In Theorem \ref{catQ}, we show that $\{R^{(a)}_{m}\}_{(a,m)\in H_k}$ is a positive solution of the level $k$ restricted $Q$-system of $\mathfrak{g}$, which is of classical type, where
$$
R^{(a)}_{m}=\left\{
\begin{array}{ll}
\beta_k(\operatorname{res}W^{(a)}_{m}) & 0\le m \le \lfloor\frac{t_a k}{2}\rfloor\\
V_{k\hat{\tau}_{a}}\left(\beta_k(\operatorname{res} W^{(a)}_{t_a k-m})\right)^{*} & \lfloor\frac{t_a k}{2}\rfloor+1\le m\le t_a k
\end{array}.
\right.
$$ Then Propositions \ref{boundarysol} and \ref{boundarysol2} imply that $\{\sigma_a^{n}V_{k \hat{\tau}_a}^n R^{(a)}_{m}\}_{(a,m)\in H_k}$ is also a solution of the level $k$ restricted $Q$-system. These are all consistent with the above.

\begin{remark}\label{remK}
All these properties but the last one are a kind of synthesis and generalizations of various observations and conjectures in \cite{springerlink:10.1007/BF01840426, Kuniba1992,Kuniba1993,MR1304818,1751-8121-44-10-103001}. Our reformulation of them in terms of the fusion ring makes all these clear.

The positivity of $\mathcal{D}_{m}^{(a)}$ in \cite{springerlink:10.1007/BF01840426} can be naturally explained when $\beta_k(\operatorname{res}W^{(a)}_{m})$ is shown to be positive as in \ref{pconj1}. Kirillov's another claim, mentioned in the introduction of this paper, that $$\mathcal{D}_{t_a k+1}^{(a)}=Q^{(a)}_{t_a k+1}(\frac{\rho}{k+h^{\vee}})=0$$ is incorporated into \ref{pconj4}, 
where $Q^{(a)}_{t_a k+1}$ denotes the character of $\operatorname{res} W^{(a)}_{t_a k+1}$. The relation
$$
\mathcal{D}^{(a)}_{t_ak-m,\hat{\mu}}= \mathcal{D}^{(a)}_{t_ak,\hat{\mu}}\mathcal{D}^{(a)*}_{m,\hat{\mu}},
$$
which is a consequence of \ref{pconj2}, has been observed in \cite{Kuniba1992} from numerical tests. Our approach now gives a way to explain this. 

It is usually more difficult to prove certain properties of $\beta_k(\operatorname{res}W^{(a)}_{m})$ as an element of $\operatorname{Fus}_{k}(\mathfrak{g})$ than to establish the analogous property of $\mathcal{D}_{m}^{(a)}$ as a complex number. However even with some partial information about $\beta_k(\operatorname{res}W^{(a)}_{m})$ as in Theorem \ref{catQ}, we can prove some of the old conjectures about complex solutions as in Theorem \ref{qdimthm} and Corollary \ref{KNSconj}. 

The periodicity \ref{pconj5} has been pointed out in \cite{chlee2012} in a less precise form than given here for simply-laced types. It allows us to describe $\beta_k(\operatorname{res}W^{(a)}_{m})$ for all $m\ge 0$ completely from $\beta_k(\operatorname{res}W^{(a)}_{m})$ for $0\le m\le \lfloor\frac{t_a k}{2}\rfloor$. This is a revelation of linear recurrence relations \cite{dfk,2009arXiv0905.3776N} in the sequence $\{\chi(\operatorname{res} W^{(a)}_{m})\}_{m=0}^{\infty}$ where $\chi$ denotes the character, which is also conjectural in general; see \cite{2014arXiv1412.1638L} for the precise statement. We expect that the periodicity part will be settled once we have sufficient understanding of the coefficients in those relations. (And since submission of this paper, there has been some progress in this topic; see \cite{Lee2016}).

The complex solutions of level $k$ restricted $Q$-systems play important roles in Nahm's conjecture \cite{Nahm}, which attempts to give a partial answer to the question of when a certain form of $q$-hypergeometric series can be a modular function. In \cite{2009arXiv0905.3776N}, the authors try to find solutions $\{Q^{(a)}_{m}\}_{(a,m)\in H}$ of the unrestricted $Q$-systems of type $A_r$ and $D_r$ such that $Q^{(a)}_{k}=1$ and $Q^{(a)}_{k+1}=Q^{(a)}_{k+2}=\cdots =Q^{(a)}_{(k+h^{\vee})-1}=0$ to find all complex solutions of the level restricted $Q$-systems. As we can see in \ref{pconj2}, these are not quite the correct conditions one should impose as it can give only partial list among them. We also add that solutions of level restricted $T$-systems of type $A_r$ satisfying similar level truncation properties \ref{pconj4} and periodicity \ref{pconj5} have been studied in \cite{MR3015686}.

The positivity \ref{pconj1} now poses the problem of combinatorial description of coefficients of $V_{\hat{\lambda}}$ in the product of many copies of $\beta_k(\operatorname{res}W^{(a)}_{m})$ for $a\in H_k$ in the spirit of the Bethe ansatz or the fermionic formula \cite{MR1745263, MR1903978}. We can find this problem discussed only for $\operatorname{Fus}_{k}(\mathfrak{sl}_2)$ in \cite{Kirillov:1992ub}.

Clarifying the role of the fusion ring in the theory of cluster algebras will also be a problem of interest along the same lines of \cite{MR2682185, MR3500832}. Regarding \ref{pconj4}, one may ask if these elements generate the fusion ideal $\ker \beta_k$ \cite{MR2499554, MR3037582}. More broadly, understanding the role of $Q$-systems and the Kirillov-Reshetikhin modules in the study of fusion ideals will also be a topic of further research. 
\end{remark}

\begin{corollary}\label{percor}
Assume that Conjecture \ref{pconj} is true. For $(a,m)\in H$,
\begin{enumerate}[label=(\roman{*}), ref=(\roman{*})]

\item \label{signdef} $\beta_k(\operatorname{res}W^{(a)}_{m})$ is always either positive, negative or zero,
\item \label{percor2}$\beta_k(\operatorname{res}W^{(a)}_{m+Mt_{a}(k+h^{\vee})})=\beta_k(\operatorname{res}W^{(a)}_{m})$ where $M$ is given by the table 
$$
\begin{array}{c|c|c|c|c|c|c|c|c|c|c}
  & A_r & B_r & C_r & D_{2l} & D_{2l+1} & E_6 & E_7 & E_8 & F_4 & G_2 \\
\hline
 M & r+1 & 2 & 2 & 2 & 4 & 3 & 2 & 1 & 1 & 1
\end{array}.
$$
\end{enumerate}
\end{corollary}
\begin{proof}
\ref{signdef} is a consequence of Conjecture \ref{pconj} \ref{pconj1}, \ref{pconj4} and \ref{pconj5}.
To prove \ref{percor2}, we can use Conjecture \ref{pconj} \ref{pconj5}, together with the fact $\sigma_a^{M}=V_{k\hat{\tau}_{a}}^M=1$.
\end{proof}

\subsection{Proof of Conjecture \ref{pconj} in some cases}\label{perMS}
The following property of the modular $S$-matrix has been noticed in \cite{Spiegelglas1990}. We make the statement in a precise form.

\begin{prop}\label{addsigma}
Let $\lambda$, $\mu$ and $\sigma$ be elements of $P$. If $\lambda'=\lambda+(k+h^{\vee})\sigma$, then
$$
S_{\hat{\lambda}', \hat{\mu}}=e^{-2\pi i (\sigma|\mu+\rho)}S_{\hat{\lambda},\hat{\mu}}.
$$
In particular, when $\sigma\in Q^{\vee}\subseteq P$ is a coroot,
$$
S_{\hat{\lambda}',\hat{\mu}}=S_{\hat{\lambda},\hat{\mu}}.
$$
\end{prop}

\begin{proof}
For $w\in W$, we have
$$
(w \sigma|\mu+\rho)=(\sigma|\mu+\rho) \pmod{\mathbb{Z}}.
$$
Then from (\ref{modularS}), we can pull out the factor $e^{-2\pi i (\sigma|\mu+\rho)}$ so that
$$S_{\hat{\lambda}', \hat{\mu}}=e^{-2\pi i (\sigma|\mu+\rho)}S_{\hat{\lambda},\hat{\mu}}.$$
If $\sigma\in Q^{\vee}$, then the fact that $(\sigma|\mu+\rho)\in \mathbb{Z}$ implies $e^{-2\pi i (\sigma|\mu+\rho)}=1$.
\end{proof}

\begin{lemma}\label{prdlma}
Let $\tau\hat{\omega}_0=\hat{\omega}_{a}$ for some $\tau \in O(\hat{\mathfrak{g}})$ and $a\in I$. Let $\zeta= e^{-2\pi i (\omega_{a}|\rho)}$. If $\lambda=m\omega_{a}$ and $\lambda'=\lambda+n (k+h^{\vee})\omega_{a}$, then 
$$
S_{\hat{\lambda}', \hat{\mu}}= \zeta^n S_{\tau^n \hat{\lambda}, \hat{\mu}}
$$
for any $\hat{\mu}\in \hat{P}_{+}^{k}$.
\end{lemma}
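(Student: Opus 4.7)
The plan is to derive the identity as a direct combination of Proposition \ref{addsigma} and equation (\ref{outAutS}), with the two contributions matching up exactly because the phase $e^{-2\pi i(\omega_a|\mu+\rho)}$ coming from the $P$-shift in Proposition \ref{addsigma} splits naturally into a $\rho$-part (which is $\sigma$) and a $\mu$-part (which is exactly the eigenvalue of $\tau$ on $S_{\hat{\lambda},\hat{\mu}}$).

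First, since $\omega_a\in P$, I would apply Proposition \ref{addsigma} with shift vector $n\omega_a$ to $\lambda' = \lambda + n(k+h^{\vee})\omega_a$, obtaining
\[
S_{\hat{\lambda}',\hat{\mu}}=e^{-2\pi i\,n(\omega_a|\mu+\rho)}S_{\hat{\lambda},\hat{\mu}}.
\]
Next, I would factor the phase as
\[
e^{-2\pi i\,n(\omega_a|\mu+\rho)}=e^{-2\pi i\,n(\omega_a|\rho)}\cdot e^{-2\pi i\,n(\omega_a|\mu)}=\sigma^n\cdot e^{-2\pi i\,n(\omega_a|\mu)},
\]
using the definition $\sigma=e^{-2\pi i(\omega_a|\rho)}$.

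It remains to identify the second factor with the action of $\tau^n$. Since $\tau\hat{\omega}_0=\hat{\omega}_a$, the convention $(\hat{\omega}_a|\mu)=(\omega_a|\mu)$ from the paper's bilinear form gives $(\tau\hat{\omega}_0|\mu)=(\omega_a|\mu)$, so iterating (\ref{outAutS}) yields
\[
\tau^n S_{\hat{\lambda},\hat{\mu}}=S_{\tau^n\hat{\lambda},\hat{\mu}}=e^{-2\pi i\,n(\tau\hat{\omega}_0|\mu)}S_{\hat{\lambda},\hat{\mu}}=e^{-2\pi i\,n(\omega_a|\mu)}S_{\hat{\lambda},\hat{\mu}}.
\]
Combining these gives $S_{\hat{\lambda}',\hat{\mu}}=\sigma^n\tau^n S_{\hat{\lambda},\hat{\mu}}$.

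Because everything reduces to splitting one exponential into two, there is no real obstacle; the only point worth double-checking is that iterating (\ref{outAutS}) really produces the scalar $e^{-2\pi i\,n(\omega_a|\mu)}$ rather than something like $e^{-2\pi i(\tau^n\hat{\omega}_0|\mu)}$. This works because $\tau$ acts only on the first argument of $S$, so each application contributes the same phase $e^{-2\pi i(\omega_a|\mu)}$, not a telescoping one. No use is made of the specific structure of $\lambda=m\omega_a$ in this argument beyond the fact that $\lambda'-\lambda$ is an integer multiple of $(k+h^\vee)\omega_a$; the hypothesis on $\lambda$ is simply what is needed in the applications of the lemma later.
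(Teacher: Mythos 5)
Your proposal is correct and follows essentially the same route as the paper: apply Proposition \ref{addsigma} with shift $n\omega_a$, split the phase $e^{-2\pi i\,n(\omega_a|\mu+\rho)}$ into the $\rho$-part $\sigma^n$ and the $\mu$-part, and identify the latter with $\tau^n$ via (\ref{outAutS}). Your additional check that iterating (\ref{outAutS}) contributes the same phase $e^{-2\pi i(\tau\hat{\omega}_0|\mu)}$ at each step is accurate, since that phase is independent of the first index of $S$.
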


\begin{proof}
If we apply Proposition \ref{addsigma} for $\sigma=n\omega_{a}$, then we get
$$
\begin{aligned}
S_{\hat{\lambda}', \hat{\mu}} &= e^{-2\pi i(n \omega_{a}|\mu+\rho)}S_{\hat{\lambda}, \hat{\mu}} \\
{} &= e^{-2\pi in (\omega_{a}|\rho)}e^{-2\pi in (\omega_{a}|\mu)}S_{\hat{\lambda} ,\hat{\mu}} \\
{} &= \zeta^n S_{\tau^n\hat{\lambda}, \hat{\mu}}.
\end{aligned}
$$
We have used (\ref{outAutS}) to get the equality in the last line. This proves our lemma.
\end{proof}

This result can be applied for any $a\in I$ listed in Table \ref{svertices}.
\begin{table}
\caption{Vertices $a\in I$ such that $\operatorname{res} W^{(a)}_m=V_{m \omega_a}$}
\label{svertices}
$$
\begin{array}{c|c|c|c|c|c|c|c|c|c|c}
 \mathfrak{g} & A_r & B_r & C_r & D_{r} & E_6 & E_7  \\
\hline
 a & 1,\cdots,r & 1 & r & 1,r,r-1 & 1,5 & 6
\end{array}
$$
\end{table}

\begin{prop}\label{Croot}
Let $a\in I$ be as in Table \ref{svertices}. For each integer $l$ such that $1 \le l \le h^{\vee}-1$, there exists a positive root $\alpha$ such that  $(\omega_a|\alpha)=1$ and $(\rho|\alpha)=l$.
\end{prop}

Our proof is based on a case-by-case check for each root system. Since it is straightforward and lengthy, we give it in Appendix \ref{app}.

\begin{theorem}\label{zeroS}
Let $a\in I$ be as in Table \ref{svertices}. Then $\beta_k(V_{(k-m)\hat{\omega}_0+m \hat{\omega}_{a}})=0$ for $k+1 \le m \le k+h^{\vee}-1$.
\end{theorem}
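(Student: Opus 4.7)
The plan is to use the quantum-dimension criterion together with the alcove identification (\ref{Valcove}). For $a$ in Table \ref{svertices} we have $c_a = 1$ (these are precisely the cominuscule nodes), so $\hat{\lambda} := (k-m)\hat{\omega}_0 + m\hat{\omega}_a$ satisfies $c_0(k-m) + c_a m = k$ and therefore lies in $\hat{P}^k$. Hence (\ref{Valcove}) applies, and it suffices to prove $\mathcal{D}_{\hat{\lambda}} = 0$, since in that case $V_{\hat{\lambda}} = 0$ by the first branch of (\ref{Valcove}).

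To show $\mathcal{D}_{\hat{\lambda}} = 0$, I would read off a vanishing factor directly from the product expression (\ref{quanProd}). The projection of $\hat{\lambda}$ to $P$ is $\lambda = m\omega_a$, so $\lambda + \rho = m\omega_a + \rho$. Given $m$ in the prescribed range $k+1 \le m \le k+h^{\vee}-1$, set
$$
l := k + h^{\vee} - m, \qquad \text{so that} \qquad 1 \le l \le h^{\vee} - 1.
$$
Proposition \ref{Croot} then produces a positive root $\alpha \in \Delta_{+}$ with $(\omega_a | \alpha) = 1$ and $(\rho | \alpha) = l$, whence
$$
(\lambda + \rho \,|\, \alpha) \;=\; m \cdot 1 + l \;=\; k + h^{\vee}.
$$
Consequently the factor $\sin\!\left(\pi (\lambda+\rho|\alpha)/(k+h^{\vee})\right) = \sin \pi = 0$ appears in the numerator of (\ref{quanProd}), forcing $\mathcal{D}_{\hat{\lambda}} = 0$ and thus $V_{\hat{\lambda}} = 0$.

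The only non-routine ingredient in this argument is Proposition \ref{Croot}, which asserts that for a minuscule node $a$, the linear form $(\omega_a | \cdot)$ takes value $1$ on positive roots of every possible $(\rho | \cdot)$-height between $1$ and $h^{\vee}-1$. This is the main obstacle and does not admit a uniform treatment: it must be checked case by case for each root system listed in Table \ref{svertices}, which is why the authors relegate its verification to the appendix. Once Proposition \ref{Croot} is in hand, however, the remainder of the proof is just the short assembly above, and in particular no input from the affine Weyl group action or from Proposition \ref{pro:quantumD} is needed beyond what is already encoded in (\ref{quanProd}) and (\ref{Valcove}).
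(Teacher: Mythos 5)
Your proof is correct and follows essentially the same route as the paper: Proposition \ref{Croot} supplies a positive root $\alpha$ with $(\lambda+\rho|\alpha)=m+(k+h^{\vee}-m)=k+h^{\vee}$, so a sine factor in the numerator of (\ref{quanProd}) vanishes and $\mathcal{D}_{\hat{\lambda}}=0$. The only (harmless) difference is at the end: you conclude $V_{\hat{\lambda}}=0$ directly from the first branch of the identification (\ref{Valcove}), whereas the paper reaches the same conclusion slightly more indirectly via Proposition \ref{pro:quantumD} and Proposition \ref{prop:qdimmu}.
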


\begin{proof}
By Proposition \ref{Croot}  and the product formula (\ref{quanProd}) for the quantum dimension, we get $\mathcal{D}_{(k-m)\hat{\omega}_0+m \hat{\omega}_{a}}=0$. Then by Proposition \ref{pro:quantumD}, 
$$
\operatorname{qdim}_{\hat{\mu}}V=\frac{S_{(k-m)\hat{\omega}_0+m \hat{\omega}_{a},\hat{\mu}}}{S_{\hat{0},\hat{\mu}}}=0
$$ 
for all $\hat{\mu}\in \hat{P}_{+}^{k}$. Then our statement follows from Proposition \ref{prop:qdimmu} \ref{itm4:prop:qdimmu}.
\end{proof}

\begin{theorem}\label{simplepconj}
Let $a\in I$ as in Table \ref{svertices}. Conjecture \ref{pconj} holds true. In particular, it holds for all $a\in I$ in the case of type $A_r$.
\end{theorem}
\begin{proof}
First note that for $a\in I$, we have $t_a=1$ and $c_a=1$. The positivity \ref{pconj1} follows from Proposition \ref{phalf}. It is straightforward to check
\begin{equation}\label{stck}
\begin{aligned}
V_{k\hat{\tau}_a} \left(\beta_k(\operatorname{res} W^{(a)}_m)\right)^{*} & = V_{k\hat{\tau}_a} V^{*}_{(k-m)\hat{\omega}_0+m \hat{\omega}_a} \\
{} & =V_{m \hat{\omega}_0+(k-m)\hat{\omega}_a}\\
{} & =\beta_k(\operatorname{res} W_{k-m}^{(a)})
\end{aligned}
\end{equation}
for all $0\le m \le k$. See Table \ref{tauprimeB} for the equality in the second line of (\ref{stck}). This proves the symmetry condition \ref{pconj2}.
\ref{pconj4} follows from Theorem \ref{zeroS}.

By Lemma \ref{prdlma}, for $0 \le m \le k+h^{\vee}-1$ and $n\in \mathbb{Z}_{\ge 0}$, we have
$$
\operatorname{qdim}_{\hat{\mu}}\beta_k(\operatorname{res} W^{(a)}_{m+n (k+h^{\vee})})=\operatorname{qdim}_{\hat{\mu}}\sigma_a^{n}V_{k\hat{\tau}_{a}}^n \beta_k(\operatorname{res}W^{(a)}_{m})
$$
for all $\hat{\mu}\in \hat{P}_{+}^{k}$. 
Then by Proposition \ref{prop:qdimmu} \ref{itm4:prop:qdimmu}, 
$$\beta_k(\operatorname{res} W^{(a)}_{m+n (k+h^{\vee})})=\sigma_a^{n}V_{k\hat{\tau}_{a}}^n \beta_k(\operatorname{res} W^{(a)}_{m}).$$
This proves the periodicity condition \ref{pconj5}.
\end{proof}
\begin{table}
\caption{Diagram automorphisms corresponding to $V\mapsto V_{k \hat{\tau}_a}V^{*}$} 
\begin{center}
\label{tauprimeB}
\begin{tabular}{c|c}
$\mathfrak{g}$ & diagram automorphisms \\
\hline
$A_r$ & 
$
\left(
\begin{array}{cccccccc}
 0 & 1 & \cdots & {a-1} & a & {a+1} & \cdots & r  \\
 a & {a-1} & \cdots & 1 & 0 & r & \cdots & {a+1}\\
\end{array}
\right)
$
\\
\hline
$B_r$ & $(0 \quad 1) \quad \text{if $a\equiv 1 \pmod 2$}$
\\
\hline
$C_r$ & 
$\left(
\begin{array}{ccccccc}
 0 & 1 & 2 & \cdots & {r-2} & {r-1} & r \\
 {r} & {r-1} & {r-2} & \cdots & 2 & 1 & 0 \\
\end{array}
\right) \quad \text{if $a=r$}$
\\
\hline
$D_{r}$, $r$ even &
$
\begin{cases}
(0\quad 1)({r-1}\quad r) \quad \text{if } 1\le a \le r-2 \text{ and } a\equiv 1 \pmod 2
\\
\left(
\begin{array}{ccccccc}
 0 & 1 & 2 & \cdots & {r-2} & {r-1} & r \\
 {r-1} & r & {r-2} & \cdots & 2 & 0 & 1 \\
\end{array}
\right) & \text{if $a=r-1$}
\\
\left(
\begin{array}{ccccccc}
 0 & 1 & 2 & \cdots & {r-2} & {r-1} & r \\
 {r} & {r-1} & {r-2} & \cdots & 2 & 1 & 0 \\
\end{array}
\right) & \text{if $a=r$}
\end{cases}
$
\\
\hline
$D_{r}$, $r$ odd &
$
\begin{cases} 
(0\quad 1) \quad \text{if } 1\le a \le r-2 \text{ and } a\equiv 1 \pmod 2
\\
\left(
\begin{array}{ccccccc}
 0 & 1 & 2 & \cdots & {r-2} & {r-1} & r \\
 {r-1} & r & {r-2} & \cdots & 2 & 0 & 1 \\
\end{array}
\right) & \text{if $a=r-1$}
\\
\left(
\begin{array}{ccccccc}
 0 & 1 & 2 & \cdots & {r-2} & {r-1} & r \\
 {r} & {r-1} & {r-2} & \cdots & 2 & 1 & 0 \\
\end{array}
\right) & \text{if $a=r$}
\end{cases}
$
\\
\hline
$E_{6}$ &
$
\begin{cases} 
(0 \quad 1)(2 \quad 6)
& \text{if $a=1,4$}\\ 
(0 \quad 5)(4 \quad 6)
& \text{if $a=2,5$}\\
(1 \quad 5)(2 \quad 4)
& \text{if $a=3,6$}
\end{cases}
$
\\
\hline
$E_{7}$ &
$
(0 \quad 6)(1 \quad 5)(2 \quad 4)
\quad \text{if $a=4,6,7$}
$
\\
\hline
otherwise & trivial
\end{tabular}
\end{center}
\end{table}
\section{Level $k$ restricted $Q$-systems in $\operatorname{Fus}_{k}(\mathfrak{g})$}\label{sec:possol}
The main result of this section is Theorem \ref{catQ} where we construct a positive solution of the level $k$ restricted $Q$-system in $\operatorname{Fus}_{k}(\mathfrak{g})$ when $\mathfrak{g}$ is of classical type. First we prove an important stepping stone to prove Theorem \ref{catQ}.
\begin{lemma}\label{centrosym}
Let $\mathfrak{g}$ be of types $A_r,B_r,C_r$ and $D_r$. Let $s=\lfloor\frac{t_a k}{2}\rfloor$. 
If $t_a k$ is even, then $\beta_k(\operatorname{res} W_{s+1}^{(a)})=V_{k\hat{\tau}_{a}}\left(\beta_k(\operatorname{res} W^{(a)}_{s-1})\right)^{*}$. If $t_a k$ is odd, then $\beta_k(\operatorname{res} W_{s+1}^{(a)})=V_{k\hat{\tau}_{a}}\beta_k(\operatorname{res}W_{s}^{(a)})$.
\end{lemma}
\begin{proof}
For type $A_r$, the statement follows from Theorem $\ref{simplepconj}$. For type $D_r$, Theorem $\ref{simplepconj}$ and the arguments in \cite[Propositions 3.3, 3.4, 3.6, 3.8]{Lee2012} can be used to prove the lemma. For the remaining cases, the method of the proof is exactly the same as in the case of type $D_r$. Since it is mainly a laborious case-by-case check, we will give a proof for types $B_r$ and $C_r$ in Appendix \ref{proofB} and \ref{proofC}, respectively.
\end{proof}
Now we can construct a positive solution of the level $k$ restricted $Q$-system in $\operatorname{Fus}_{k}(\mathfrak{g})$. The basic idea is to glue two different solutions of the unrestricted $Q$-system from the opposite directions to form a single solution of the level $k$ restricted $Q$-system. In order to glue them consistently at the intersection, we need to employ Lemma \ref{centrosym}. This idea has been used in \cite{Lee2012} to obtain a positive real solution of the level restricted $Q$-system of type $D_r$. The table after Proposition \ref{phalf} indicates why this method does not apply to other exceptional types.
\begin{definition}\label{def:Rma}
For the brevity of notation, let us write $\beta_k(\operatorname{res}W^{(a)}_{m})$ as $Z^{(a)}_{m}$ for the rest of this section. For each $a\in I$, let us define $R^{(a)}_{m}\in \operatorname{Fus}_{k}(\mathfrak{g})$ by
$$
R^{(a)}_{m}=\left\{
\begin{array}{ll}
Z^{(a)}_{m} & 0\le m \le \lfloor\frac{t_a k}{2}\rfloor\\
V_{k\hat{\tau}_{a}}Z^{(a)*}_{t_a k-m} & \lfloor\frac{t_a k}{2}\rfloor+1\le m\le t_a k
\end{array}
\right.
$$ and $R^{(a)}_{-1}=R^{(a)}_{t_a k+1}=0$.
\end{definition}

\begin{theorem}\label{catQ}
Let $\mathfrak{g}$ of types $A_r,B_r,C_r$ and $D_r$. Then $\{R^{(a)}_{m}\}_{(a,m)\in H_k}$ is a positive solution of the level $k$ restricted $Q$-system of type $\mathfrak{g}$.
\end{theorem}

\begin{proof}
The positivity of $R^{(a)}_{m},\, (a,m)\in H_k$ is clear from Proposition \ref{phalf}. We have to check that the equality
\begin{equation}
\left(R^{(a)}_{m}\right)^2 = R^{(a)}_{m-1}R^{(a)}_{m+1} + 
\prod_{b:b \sim a} \prod_{j=0}^{-C_{a b}-1}
R^{(b)}_{\lfloor\frac{C_{b a}m - j}{C_{a b}}\rfloor} \label{checkQ}
\end{equation}
holds true for each $0 \le m \le t_a k$.
For $0 \le m \le \lfloor\frac{t_a k}{2}\rfloor-1$, it follows from Proposition \ref{pr:betaim}. For $m=\lfloor\frac{t_a k}{2}\rfloor$, we need to use Lemma \ref{centrosym} together with Proposition \ref{pr:betaim}.

Note that $\{Z^{(a)*}_{m}\}_{(a,m)\in H}$ is also a solution of the unrestricted $Q$-system. By Proposition \ref{gauge}, $\{V_{k\hat{\tau}_{a}}Z_{m}^{(a)*}\}_{(a,m)\in H}$ is also a solution of the unrestricted $Q$-system. In concrete terms, we have
\begin{equation}\label{checkW}
\left(V_{k\hat{\tau}_{a}}Z^{(a)*}_{m}\right)^2 =\left(V_{k\hat{\tau}_{a}}Z^{(a)*}_{m-1}\right)\left(V_{k\hat{\tau}_{a}}Z^{(a)*}_{m+1}\right)+ 
\prod_{b:b \sim a} \prod_{j=0}^{-C_{a b}-1}
\left(V_{k\hat{\tau}_{b}}Z^{(b)*}_{\lfloor\frac{C_{b a}m - j}{C_{a b}}\rfloor}\right)
\end{equation}
for each $m\geq 0$. Since this holds especially for $0\le m \le \lfloor\frac{t_a k}{2}\rfloor-1$, (\ref{checkQ}) is true for $\lfloor\frac{t_a k+1}{2}\rfloor+1 \le m \le t_a k$. The only possible value not verified so far, which happens when $t_ak$ is odd, is $m=\lfloor\frac{t_a k+1}{2}\rfloor$. For $m=\lfloor\frac{t_a k+1}{2}\rfloor$, again we can use Lemma \ref{centrosym} together with (\ref{checkW}) for $V_{k\hat{\tau}_{a}}Z^{(a)*}_{m}$.
\end{proof}

\begin{remark}
Conjecture \ref{pconj} claims that $\{\beta_k(\operatorname{res}W^{(a)}_{m})\}_{(a,m)\in H_k}$ is a positive solution of the level $k$ restricted $Q$-system. We believe that this is a unique positive solution up to certain obvious symmetries like the conjugation $*$ and a gauge. Lemma \ref{centrosym} proves that $R^{(a)}_{m}=\beta_k(\operatorname{res}W^{(a)}_{m})$ for $0\le m\le \lfloor\frac{t_a k}{2}\rfloor+1$ in all classical types. However, the identity $R^{(a)}_{m}=\beta_k(\operatorname{res} W^{(a)}_{m})$ for $\lfloor\frac{t_a k}{2}\rfloor+2\le m\le t_a k$ still remains to be proved in general except some cases, for example, the cases covered in Theorem \ref{simplepconj}.
\end{remark}

\section{Level $k$ restricted $Q$-systems in $\mathbb{C}$}\label{sec:appsfusion}
In this section, we study complex solutions of the level $k$ restricted $Q$-systems. 
\subsection{String of zeros}
In this subsection, we investigate some conditions under which we have a string of zeros like \ref{pconj4} of Conjecture \ref{pconj} in a complex solution of $Q$-systems. For the rest of this section, we assume that $\{Q^{(a)}_{m}\}_{(a,m)\in H}$ is a complex solution of the $Q$-system of type $\mathfrak{g}$. 

\begin{lemma}\label{string0a}
Let $m\in \mathbb{Z}_{\ge 0}$. Suppose that $Q^{(a)}_{t_a m+1}=0$ for all $a\in I$. Then $Q^{(a)}_{t_a m+1}=\cdots = Q^{(a)}_{t_a (m+1)}=0$ for all $a\in I$.
\end{lemma}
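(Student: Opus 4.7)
The argument is that zeros propagate through the $Q$-system via its defining relation
$$
(Q^{(a)}_s)^2 = Q^{(a)}_{s-1} Q^{(a)}_{s+1} + \prod_{b \sim a} \prod_{j=0}^{-C_{ab}-1} Q^{(b)}_{\lfloor (C_{ba}s-j)/C_{ab} \rfloor},
$$
combined with the absence of nilpotents in $\mathbb{C}$, so that $(Q^{(a)}_s)^2=0$ forces $Q^{(a)}_s=0$. The key simplification is the symmetrized Cartan identity $t_a C_{ba}=t_b C_{ab}$: setting $p=-C_{ab}$ and $q=-C_{ba}$ (so $qt_a=pt_b$), the indices factor cleanly as
$$
\left\lfloor \frac{C_{ba}(t_am+l)-j}{C_{ab}} \right\rfloor = t_b m + \left\lfloor \frac{ql+j}{p} \right\rfloor.
$$

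The plan is to induct on $l=1,2,\ldots,\max_a t_a$, showing that $Q^{(a)}_{t_am+l}=0$ for every $a\in I$ with $t_a\ge l$. The base case $l=1$ is exactly the hypothesis. For the inductive step with $l\ge 2$, I apply the relation at $s=t_am+l$: the first term vanishes by the outer inductive hypothesis, since $Q^{(a)}_{t_am+l-1}=0$. The entire problem therefore reduces to showing that the product term vanishes.

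The product vanishes automatically whenever $a$ has a \emph{long} neighbor $b$ (i.e.\ $t_b=1<t_a$): in that case $p=t_a$, $q=1$, and at least one of the floors $\lfloor (l+j)/t_a \rfloor$ for $j=0,\ldots,t_a-1$ equals $1$, contributing a factor $Q^{(b)}_{t_bm+1}=0$. Hence $Q^{(a)}_{t_am+l}=0$ is immediate for any short vertex $a$ adjacent to the long part. For the remaining short vertices --- whose neighbors are all of the same length, so that each inner product reduces to the single factor $Q^{(b)}_{t_bm+l}$ --- I run a secondary induction along the connected short subdiagram of the Dynkin diagram. In every non simply-laced type ($B_r, C_r, F_4, G_2$) the set $\{c\in I:t_c>1\}$ is a connected path with at least one vertex adjacent to the long part, so propagating from that boundary vertex inward yields, for each non-boundary short $a$, a same-length neighbor $b_0$ with $Q^{(b_0)}_{t_{b_0}m+l}=0$ already established, giving the needed zero factor.

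The main obstacle is arranging the secondary induction in $C_r$, where the short subdiagram is a path of length $r-1$ with only one boundary vertex at $r-1$, so the propagation must run the full length of the path. The simply-laced types are trivial (only $l=1$ occurs), and $B_r$, $G_2$, and $F_4$ admit very short or immediate secondary inductions. Everything else is routine floor arithmetic and Cartan-matrix bookkeeping.
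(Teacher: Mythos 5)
Your proposal is correct and follows the same route the paper intends: the paper's proof is simply ``a consequence of (\ref{eq:qsys})'', i.e.\ applying the recursion at $s=t_am+l$, noting the term $Q^{(a)}_{t_am+l-1}Q^{(a)}_{t_am+l+1}$ vanishes and that the product term acquires a zero factor via the index identity $\lfloor(C_{ba}s-j)/C_{ab}\rfloor=t_bm+\lfloor(ql+j)/p\rfloor$, so $(Q^{(a)}_{t_am+l})^2=0$ forces $Q^{(a)}_{t_am+l}=0$ for a complex solution. Your write-up just makes explicit the case analysis (long neighbors give the factor $Q^{(b)}_{t_bm+1}$ directly; in $C_r$, $F_4$ one propagates along the short path), which the paper leaves to the reader.
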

\begin{proof}
This is a consequence of (\ref{eq:qsys}).
\end{proof}

\begin{lemma}\label{string0}
Let $m\in \mathbb{Z}_{\ge 0}$. Suppose $Q^{(a)}_{t_a m}=0$ for all $a\in I$. Then $Q^{(a)}_{t_a m+1}=\cdots =Q^{(a)}_{t_a (m+1)-1}=0$ for all $a\in I$.
\end{lemma}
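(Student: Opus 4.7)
The plan is to prove by induction that $Q^{(a)}_{t_a m+s}=0$ for every $a\in I$ with $t_a>1$ and every $s$ with $1\le s\le t_a-1$; the claim is vacuous when $t_a=1$, since the index interval $t_am+1\le m'\le t_a(m+1)-1$ is then empty. So the content is concentrated in the nodes with $t_a\in\{2,3\}$, which only occur in types $B_r,C_r,D_r,F_4,G_2$ (where $t_a\le 3$).

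The mechanical core is to specialize the $Q$-system relation~\eqref{eq:qsys} at $(a,t_am+s)$. Using the identity $|C_{ab}|\,t_b=|C_{ba}|\,t_a$ for adjacent nodes (equivalent to $C_{ab}/C_{ba}=t_a/t_b$), the argument of each floor splits cleanly as
\begin{equation*}
\left\lfloor\frac{|C_{ba}|(t_am+s)+j}{|C_{ab}|}\right\rfloor
 = t_bm + \left\lfloor\frac{|C_{ba}|s+j}{|C_{ab}|}\right\rfloor,
\end{equation*}
so the $Q$-system reads
\begin{equation*}
(Q^{(a)}_{t_am+s})^2 = Q^{(a)}_{t_am+s-1}\,Q^{(a)}_{t_am+s+1} \;+\; \prod_{b\sim a}\prod_{j=0}^{|C_{ab}|-1} Q^{(b)}_{t_bm+\lfloor(|C_{ba}|s+j)/|C_{ab}|\rfloor}.
\end{equation*}

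The bilinear term vanishes immediately: for $s=1$ we have $Q^{(a)}_{t_am}=0$ by hypothesis, and for $s\ge 2$ the factor $Q^{(a)}_{t_am+s-1}$ is zero by the outer induction on $s$. For the product term, classify each neighbor $b$ of $a$ by its $t$-value. Since $t_a,t_b\in\{1,2,3\}$ and $t_a/t_b$ is forced to be an integer, either $t_b<t_a$ (then necessarily $t_b=1$, $|C_{ab}|=t_a$, $|C_{ba}|=1$, and the $j=0$ factor equals $Q^{(b)}_{t_bm+\lfloor s/t_a\rfloor}=Q^{(b)}_{t_bm}=0$ since $0<s<t_a$), or $t_b=t_a$ (then $|C_{ab}|=|C_{ba}|=1$, $j=0$, and the factor equals $Q^{(b)}_{t_am+s}$, of precisely the form we want to prove vanishing).

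Consequently, whenever $a$ has any neighbor with $t_b<t_a$ the product is already zero and we are done. The remaining nodes are those whose neighbors all share their $t$-value; these form a proper, connected sub-diagram of the Dynkin diagram (quick inspection handles the chain $\{1,\ldots,r-1\}\subset C_r$, the edge $\{3,4\}\subset F_4$, and the trivial cases in $B_r$ and $G_2$). For these I run an inner induction on graph distance to $\partial:=\{a':t_{a'}>s,\ \exists b\sim a'\text{ with }t_b<t_{a'}\}$. A node at positive distance has a neighbor $a'$ strictly nearer to $\partial$, and since $t_{a'}=t_a$ the factor $Q^{(a')}_{t_am+s}$ appears in the product and is zero by the inner induction, killing the product term. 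In every case the right-hand side of the $Q$-system is zero, forcing $Q^{(a)}_{t_am+s}=0$. The main obstacle is the bookkeeping of the nested induction and verifying the floor-splitting identity cleanly; once those are in place the rest is a case-by-case check on a very small number of sub-diagrams.
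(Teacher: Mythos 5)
Your proof is correct and is in essence the paper's own (unwritten) argument: the paper simply says the lemma "follows from (\ref{eq:qsys}) easily," and your verification — splitting the floor using $|C_{ab}|t_b=|C_{ba}|t_a$, observing the claim is vacuous at nodes with $t_a=1$, killing the bilinear term by induction on $s$, and propagating zeros along the short-root subdiagram from the nodes adjacent to a long node — is exactly the direct consequence of the recurrence that is being invoked, with $x^2=0\Rightarrow x=0$ justified because the solution is complex, as assumed in that section. The only cosmetic slip is listing $D_r$ among the types containing nodes with $t_a\in\{2,3\}$ (it is simply laced, so the statement is vacuous there), which does not affect the argument.
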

\begin{proof}
It also follows from (\ref{eq:qsys}) easily.
\end{proof}

\begin{lemma}\label{string0b}
Let $m\ge 1$. Assume that $Q^{(a)}_{t_a m-1}\neq 0$ for all $a\in I$ and $\{Q^{(a)}_{t_a m}\}_{a\in I}$ is a gauge. Then $Q^{(a)}_{t_a m+1}=\cdots =Q^{(a)}_{t_a (m+2)-1}=0$ for all $a\in I$.
\end{lemma}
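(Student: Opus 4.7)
The plan is to establish the lemma in three steps: first derive $Q^{(a)}_{t_a m+1}=0$ for every $a\in I$ directly from the boundary condition; then extend this single layer of zeros to a string of length $t_a$ using Lemma \ref{string0a}; and finally extend by another $t_a-1$ positions using Lemma \ref{string0}.

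For the first step, I would evaluate the recurrence (\ref{eq:qsys}) at index $t_a m$. The key input is the symmetry $t_a C_{ab}=t_b C_{ba}$, which follows from $C_{ab}=t_a(\alpha_a|\alpha_b)$ together with symmetry of $(\cdot|\cdot)$. Since $0\le -j/C_{ab}<1$ for $0\le j\le -C_{ab}-1$, this symmetry yields
\[
\left\lfloor \frac{C_{ba}\cdot t_a m - j}{C_{ab}}\right\rfloor \;=\; t_b m
\]
uniformly in $j$. Consequently the product term on the right-hand side of (\ref{eq:qsys}) collapses to $\prod_{b\sim a}(Q^{(b)}_{t_b m})^{-C_{ab}}$, and the recurrence becomes
\[
(Q^{(a)}_{t_a m})^2 \;=\; Q^{(a)}_{t_a m-1}\, Q^{(a)}_{t_a m+1} \;+\; \prod_{b\sim a}(Q^{(b)}_{t_b m})^{-C_{ab}}.
\]
Since $(Q^{(a)}_{t_a m})_{a\in I}$ solves the boundary $Q$-system (\ref{Qbdryeq}), the square on the left and the product on the right cancel, leaving $Q^{(a)}_{t_a m-1}\, Q^{(a)}_{t_a m+1}=0$. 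The hypothesis $Q^{(a)}_{t_a m-1}\neq 0$ then forces $Q^{(a)}_{t_a m+1}=0$ for every $a\in I$.

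With this in hand, Lemma \ref{string0a} applied at index $m$ gives $Q^{(a)}_{t_a m+1}=\cdots=Q^{(a)}_{t_a(m+1)}=0$ for all $a$, and then Lemma \ref{string0} applied at index $m+1$ gives $Q^{(a)}_{t_a(m+1)+1}=\cdots=Q^{(a)}_{t_a(m+2)-1}=0$. Concatenating the two ranges produces the desired string of zeros $Q^{(a)}_{t_a m+1}=\cdots=Q^{(a)}_{t_a(m+2)-1}=0$, whose total length is exactly $2t_a-1$. The only genuinely nontrivial step is the first one, and the main obstacle is precisely the combinatorial check that the inhomogeneous term of (\ref{eq:qsys}) at index $t_a m$ reduces to the right-hand side of the boundary $Q$-system; this is where the symmetrized Cartan relation plays its role.
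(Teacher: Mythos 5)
Your argument is correct and is essentially the paper's proof: specialize (\ref{eq:qsys}) at index $t_a m$ so the inhomogeneous term collapses to $\prod_{b\sim a}(Q^{(b)}_{t_b m})^{-C_{ab}}$, cancel it against the boundary $Q$-system to get $Q^{(a)}_{t_a m-1}Q^{(a)}_{t_a m+1}=0$, hence $Q^{(a)}_{t_a m+1}=0$, and then chain Lemma \ref{string0a} and Lemma \ref{string0} exactly as the paper does (the paper simply writes the specialized recurrence without spelling out the floor computation you verify). One small slip: the symmetrized Cartan relation should read $t_b C_{ab}=t_a C_{ba}$ (equivalently $C_{ab}/t_a=C_{ba}/t_b$), not $t_a C_{ab}=t_b C_{ba}$; the floor identity $\lfloor (C_{ba}t_a m-j)/C_{ab}\rfloor=t_b m$ that you actually use follows from the correct form, so the argument stands as written.
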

\begin{proof}
The equation (\ref{eq:qsys})
$$
\left(Q^{(a)}_{t_a m}\right)^2 = Q^{(a)}_{t_a m-1}Q^{(a)}_{t_a m+1} + \prod_{b:b \sim a} \left(Q^{(b)}_{t_b m}\right)^{-C_{a b}}
$$ implies $Q^{(a)}_{t_a m+1}=0$. Thus Lemma $\ref{string0a}$ implies
$$
Q^{(a)}_{t_a m+1}=\cdots = Q^{(a)}_{t_a (m+1)}=0
$$ for all $a\in I$. 
Then we obtain
$$
Q^{(a)}_{t_a (m+1)+1}=\cdots =Q^{(a)}_{t_a (m+2)-1}=0
$$
from Lemma \ref{string0}.
\end{proof}

\begin{lemma}\label{string0c}
Let $m\in \mathbb{Z}_{\ge 0}$. Suppose $Q^{(a)}_{t_a m}=\cdots =Q^{(a)}_{t_a (m+1)-1}=0$ for all $a\in I$ and $Q^{(b)}_{t_b(m+1)}=0$ for at least one vertex $b\in I$. Then $Q^{(a)}_{t_a (m+1)}=\cdots=Q^{(a)}_{t_a(m+2)-1}=0$  for all $a\in I$.
\end{lemma}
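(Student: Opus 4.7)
The plan is to reduce the system of equations at index $n = t_a(m+1)$ to the boundary $Q$-system, and then propagate the zero supplied at the vertex $b$ throughout the connected Dynkin diagram. First I would inspect the $Q$-system relation (\ref{eq:qsys}) at $n = t_a(m+1)$ for a fixed $a \in I$. The term $Q^{(a)}_{n-1}Q^{(a)}_{n+1}$ vanishes because $Q^{(a)}_{t_a(m+1)-1}=0$ by hypothesis, so $(Q^{(a)}_{t_a(m+1)})^2$ equals only the product over the neighbours $b \sim a$.

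Next I would simplify the floor index $\lfloor (C_{ba}\,t_a(m+1)-j)/C_{ab}\rfloor$ appearing in each factor. Using the standard identity $t_a C_{ba}=t_b C_{ab}$, this index equals $t_b(m+1)+\lfloor -j/C_{ab}\rfloor$, and because $0\le j\le -C_{ab}-1$ the floor is $0$. Hence every factor reduces to $Q^{(b)}_{t_b(m+1)}$, so the whole system at level $m+1$ collapses to
\[
\bigl(Q^{(a)}_{t_a(m+1)}\bigr)^2 = \prod_{b\sim a}\bigl(Q^{(b)}_{t_b(m+1)}\bigr)^{-C_{ab}}, \qquad a\in I,
\]
which is precisely the boundary $Q$-system of Definition in (\ref{Qbdryeq}), written in the variables $\bigl(Q^{(a)}_{t_a(m+1)}\bigr)_{a\in I}$.

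The propagation step is then easy: given $Q^{(b)}_{t_b(m+1)}=0$ at the distinguished vertex, any vertex $a$ adjacent to $b$ has one factor on the right-hand side equal to zero (with exponent $-C_{ab}\ge 1$), so $\bigl(Q^{(a)}_{t_a(m+1)}\bigr)^2=0$ and hence $Q^{(a)}_{t_a(m+1)}=0$. Induction on graph distance from $b$, combined with the connectedness of the Dynkin diagram $X$, yields $Q^{(a)}_{t_a(m+1)}=0$ for every $a\in I$.

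Finally, applying Lemma \ref{string0} with $m$ replaced by $m+1$ fills in the remaining zeros $Q^{(a)}_{t_a(m+1)+1}=\cdots=Q^{(a)}_{t_a(m+2)-1}=0$ for all $a\in I$, completing the desired block. The only mildly delicate point is the floor simplification in the second paragraph; once the equations at $n=t_a(m+1)$ are recognised as the boundary $Q$-system, the rest is formal.
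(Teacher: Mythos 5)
Your proof is correct and follows the same route as the paper: use the relation (\ref{eq:qsys}) at the indices $t_a(m+1)$ (where the hypothesis kills the term $Q^{(a)}_{t_a(m+1)-1}Q^{(a)}_{t_a(m+1)+1}$ and the floor indices reduce to $t_b(m+1)$) to get $Q^{(a)}_{t_a(m+1)}=0$ for all $a$, then invoke Lemma \ref{string0} with $m$ replaced by $m+1$. You merely spell out the details the paper leaves implicit, namely the identity $t_aC_{ba}=t_bC_{ab}$ for the floor computation and the propagation of the zero from the vertex $b$ via connectedness of the Dynkin diagram.
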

\begin{proof}
We can show that $Q^{(a)}_{t_a(m+1)}=0$ for all $a\in I$ using (\ref{eq:qsys}). Then the lemma follows from Lemma $\ref{string0}$.
\end{proof}
\subsection{Complex solutions of level $k$ restricted $Q$-systems}
Let us begin with a simple lemma.
\begin{lemma}\label{zeq}
Let $\mathbf{w}=\{w^{(a)}_{m}\}_{(a,m)\in H_k}$ be a complex solution of the level $k$ restricted $Q$-system such that $w^{(a)}_{m}\neq 0$ for any $(a,m)\in H_k$. If $\mathbf{z}=\{z^{(a)}_{m}\}_{(a,m)\in H}$ is a solution of the unrestricted $Q$-system and $w^{(a)}_{1}=z^{(a)}_{1}$ for any $a\in I$, then $w^{(a)}_{m}=z^{(a)}_{m}$ for $0\le m \le t_a k$.
\end{lemma}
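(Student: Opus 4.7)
The plan is a forward induction on $m$ driven by the recurrence \eqref{eq:qsys}. Solved for the advancing term,
\[
Q^{(a)}_{m+1} = \frac{(Q^{(a)}_m)^2 \;-\; \prod_{b \sim a} \prod_{j=0}^{-C_{ab}-1} Q^{(b)}_{\lfloor (C_{ba}m - j)/C_{ab}\rfloor}}{Q^{(a)}_{m-1}},
\]
the same formula holds with $Q$ replaced by $w$, and the division is legal because $w^{(a)}_{m-1}$ is nonvanishing by the hypothesis on $w$. Once the base case $w^{(a)}_0=Q^{(a)}_0$ is secured (the datum $w^{(a)}_1=Q^{(a)}_1$ is given), the inductive step will follow by observing that every index appearing in the numerator lies at or below the current level, after a mild reordering of the induction for non-simply-laced types so that vertices of different root lengths advance coherently. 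The process then terminates at $m=t_ak$.

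The serious work is the base case. Setting $m=0$ in \eqref{eq:qsys} and using $w^{(a)}_{-1}=Q^{(a)}_{-1}=0$ collapses the recurrence to the boundary $Q$-system \eqref{Qbdryeq}, which both $(w^{(a)}_0)_{a\in I}$ and $(Q^{(a)}_0)_{a\in I}$ satisfy. I would write $w^{(a)}_0 = c^{(a)}Q^{(a)}_0$, so that the ratios $(c^{(a)})_{a\in I}$ themselves solve the boundary $Q$-system. Proposition~\ref{gauge} then lets me view the whole sequence $w$ as a gauge twist of $Q$, and combining the matching at $m=1$, the right-hand boundary $w^{(a)}_{t_ak+1}=0$, and the non-vanishing of $w$ throughout $H_k$ should force $c^{(a)}=1$ for every $a\in I$, giving $w^{(a)}_0=Q^{(a)}_0$.

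The principal obstacle I expect is isolating the trivial gauge inside the boundary $Q$-system \eqref{Qbdryeq}, which admits a nontrivial finite symmetry group; this is the only place where all three hypotheses of the lemma (the $m=1$ matching, the level $k$ restriction, and the non-vanishing of $w$ on $H_k$) must be used simultaneously. Once this has been handled, the remainder of the proof is purely mechanical, driven entirely by the recurrence and the non-vanishing assumption.
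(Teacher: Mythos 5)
Your forward induction is exactly the paper's (very short) argument: the proof in the text amounts to solving (\ref{eq:qsys}) for the advancing term, dividing by $w^{(a)}_{m-1}$, which is legitimate by the non-vanishing hypothesis on $H_k$, and ordering the induction by $m/t_a$ so that short-root vertices advance faster; that part of your proposal is fine and stays inside $H_k$ as needed.

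The gap is your treatment of the base case, and it is not just a missing detail: the step you defer cannot be carried out. First, Proposition \ref{gauge} does not let you ``view the whole sequence $w$ as a gauge twist of $Q$''; it only says that $\left(c^{(a)}Q^{(a)}_m\right)$ is \emph{another} solution, and two solutions whose $m=1$ slices agree need not be gauge related at all --- indeed, if $w$ really were $c\,Q$, then $w^{(a)}_1=Q^{(a)}_1\neq 0$ would force $c^{(a)}=1$ immediately, so the whole issue is precisely that $w$ need not be of this form. Second, and more seriously, the conclusion you hope to extract ($c^{(a)}=1$) does not follow from the stated hypotheses. Take $X=A_1$ and let $\mathcal{D}_m=\sin\bigl(\tfrac{(m+1)\pi}{k+2}\bigr)/\sin\bigl(\tfrac{\pi}{k+2}\bigr)$ be the quantum-dimension solution, so $\mathcal{D}_{-1}=0$, $\mathcal{D}_0=\mathcal{D}_k=1$, $\mathcal{D}_{k+1}=0$ and $\mathcal{D}_m>0$ for $0\le m\le k$. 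Then $w_m:=(-1)^{m+1}\mathcal{D}_m$, $0\le m\le k$, is a nowhere-vanishing solution of the level $k$ restricted $Q$-system with $w_1=\mathcal{D}_1$, yet $w_0=-1\neq \mathcal{D}_0$; for type $A_r$ one can likewise take $w^{(a)}_m=(-1)^{a(m+1)}\mathcal{D}^{(a)}_m$. So agreement at $m=0$ has to be an input rather than a consequence: the lemma is used (and implicitly read in the paper's one-line proof) with the normalization $w^{(a)}_0=Q^{(a)}_0$, which in the application is automatic because $R^{(a)}_0=W^{(a)}_0$ is the unit of $R$ and $\operatorname{qdim}_{\hat{\mu}}$ is a ring homomorphism, so both sides equal $1$. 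Once the $m=0$ and $m=1$ slices agree, your induction (which is the paper's proof) finishes the argument; the attempt to squeeze $w^{(a)}_0=Q^{(a)}_0$ out of the $m=1$ matching, the right-hand boundary and the non-vanishing is the part that fails.
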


\begin{proof}
This is a direct consequence of the recursion (\ref{eq:qsys}).
\end{proof}
\begin{theorem}\label{qdimthm}
Let $\mathfrak{g}$ be of types $A_r,B_r,C_r$ and $D_r$ and let $\hat{\mu}\in \hat{P}_{+}^{k}$. Assume that $\mathcal{D}^{(a)}_{m,\hat{\mu}}\neq 0$ for all $a\in I$ and $0\le m \le \lfloor \frac{t_a k}{2}\rfloor$. 
Then the following properties hold for each $a\in I$ : 
\begin{enumerate}[label=(\roman{*}), ref=(\roman{*})]

\item \label{qdimthm:WR} $\mathcal{D}^{(a)}_{m,\hat{\mu}}=\operatorname{qdim}_{\hat{\mu}} R^{(a)}_{m}$ for $0\le m \le t_a k$,
\item \label{qdimthm:sym} $\mathcal{D}^{(a)}_{m,\hat{\mu}}= e^{-2\pi i (\hat{\tau}_a |\mu)}\mathcal{D}^{(a)*}_{t_ak-m,\hat{\mu}}$ for $0\le m \le t_a k$,
\item \label{qdimthm:zero} $\mathcal{D}^{(a)}_{t_a k+1,\hat{\mu}}=\mathcal{D}^{(a)}_{t_a k+2,\hat{\mu}}=\cdots =\mathcal{D}^{(a)}_{t_a(k+h^{\vee})-1,\hat{\mu}}=0$.
\end{enumerate}
\end{theorem}

\begin{proof}
From the assumption that $\operatorname{qdim}_{\hat{\mu}}(R^{(a)}_{m})=\mathcal{D}^{(a)}_{m,\hat{\mu}}\neq 0$ for all $0\le m \le \lfloor \frac{t_a k}{2}\rfloor$, we have $\operatorname{qdim}_{\hat{\mu}}(R^{(a)}_{m})\neq 0$ for all $0\le m \le t_a k$ by Theorem \ref{catQ} and Proposition \ref{prop:qdimmu}. Thus we can conclude that $\operatorname{qdim}_{\hat{\mu}}(R^{(a)}_{m})$ must be equal to $\mathcal{D}^{(a)}_{m,\hat{\mu}}$ for $0\le m \le t_a k$ by Lemma \ref{zeq}. This proves \ref{qdimthm:WR} and \ref{qdimthm:sym}.

Since $\mathcal{D}^{(a)}_{t_a k-1,\hat{\mu}}\neq 0$ for each $a\in I$ by \ref{qdimthm:sym} and $\{\mathcal{D}^{(a)}_{t_a k,\hat{\mu}}\}_{a\in I}$ is a gauge, we get
$$
\mathcal{D}^{(a)}_{t_a k+1,\hat{\mu}}=\cdots =\mathcal{D}^{(a)}_{t_a (k+1),\hat{\mu}}=0
$$
for any $a\in I$ by Lemma \ref{string0b}. Since the Dynkin diagram of $\mathfrak{g}$ has at least one vertex listed in Table \ref{svertices}, there exists $b\in I$ such that
$$\mathcal{D}^{(b)}_{t_b k+1,\hat{\mu}}=\mathcal{D}^{(b)}_{t_b k+2,\hat{\mu}}=\cdots =\mathcal{D}^{(b)}_{t_b(k+h^{\vee})-1,\hat{\mu}}=0$$
by Theorem \ref{simplepconj} and Proposition \ref{prop:qdimmu} \ref{itm4:prop:qdimmu}. Thus the assumptions of Lemma \ref{string0c} are now all satisfied and we can conclude that
$$
\mathcal{D}^{(a)}_{t_a k+1,\hat{\mu}}=\mathcal{D}^{(a)}_{t_a k+2,\hat{\mu}}=\cdots =\mathcal{D}^{(a)}_{t_a(k+h^{\vee})-1,\hat{\mu}}=0
$$
for any $a\in I$. We thus have proved \ref{qdimthm:zero}.
\end{proof}

\begin{remark}
If we can establish the above result without assuming $\mathcal{D}^{(a)}_{m,\hat{\mu}}\neq 0$ for $a\in I$ and $0\le m \le \lfloor \frac{t_a k}{2}\rfloor$, then we can prove the corresponding statements for the solution in  $\operatorname{Fus}_{k}(\mathfrak{g})$.
\end{remark}

Now we have a proof of the conjecture of Kirillov \cite{springerlink:10.1007/BF01840426} and Kuniba, Nakanishi and Suzuki \cite[Conjecture 14.2]{1751-8121-44-10-103001} for all classical types. 
\begin{corollary} \label{KNSconj} 
Let $\mathfrak{g}$ be of types $A_r,B_r,C_r$ and $D_r$. For each $a\in I$, the following properties hold :
\begin{enumerate}[label=(\roman{*}), ref=(\roman{*})]

\item \label{KNSconj:1} $\mathcal{D}_{m}^{(a)}>0$ for $0\le m \le t_a k$,
\item $\mathcal{D}_{m}^{(a)}=\mathcal{D}_{t_a k-m}^{(a)}$ for $0\le m \le t_a k$,
\item \label{KNSconj:5} $\mathcal{D}^{(a)}_{t_a k+1}=\mathcal{D}^{(a)}_{t_a k+2}=\cdots =\mathcal{D}^{(a)}_{t_a(k+h^{\vee})-1}=0$,
\item \label{KNSconj:ineq} $\mathcal{D}_{m-1}^{(a)}<\mathcal{D}_{m}^{(a)}$ for $1\le m\le \lfloor \frac{t_a k}{2}\rfloor$.
\end{enumerate}
\end{corollary}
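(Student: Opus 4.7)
The plan is to deduce parts~(i)--(iv) by specializing Theorem~\ref{qdimthm} to $\hat{\mu}=\hat{0}$, and then derive the new strict monotonicity~(v) from the level $k$ restricted $Q$-system recurrence.

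First I would verify the hypothesis of Theorem~\ref{qdimthm} at $\hat{\mu}=\hat{0}$: for $0\le m\le\lfloor t_ak/2\rfloor$ we have $W^{(a)}_m=R^{(a)}_m$, and this element is positive in $R$ by the positivity proposition at the start of Section~\ref{pp}, so $\mathcal{D}^{(a)}_{m,\hat{0}}=\operatorname{qdim}W^{(a)}_m>0$ by Proposition~\ref{prop:qdimmu}\ref{itm:pos}. With the hypothesis in place, (i) follows from \ref{qdimthm:WR} together with positivity of $R^{(a)}_m$ (Theorem~\ref{catQ}); (iii) is \ref{qdimthm:unitB} at $\hat{\mu}=\hat{0}$, since $e^{-2\pi i(\tau_a\hat{\omega}_0|0)}=1$; (ii) is \ref{qdimthm:sym} at $\hat{\mu}=\hat{0}$, using (iii) and the reality of $\mathcal{D}^{(a)}_m$ coming from (i); and (iv) is \ref{qdimthm:zero} at $\hat{\mu}=\hat{0}$.

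For~(v), I would set $a_m:=\mathcal{D}^{(a)}_m$ and $r_m:=a_m/a_{m-1}$. Applying the homomorphism $\operatorname{qdim}$ (Proposition~\ref{prop:qdimmu}\ref{itm:hom}) to the recurrence satisfied by $\left(R^{(a)}_m\right)$ in Theorem~\ref{catQ} yields, for $1\le m\le t_ak-1$,
\[
a_m^2 = a_{m-1}a_{m+1} + c_m,\qquad c_m := \prod_{b\sim a}\prod_{j=0}^{-C_{ab}-1}\mathcal{D}^{(b)}_{\lfloor(C_{ba}m-j)/C_{ab}\rfloor}.
\]
Using the symmetrizability identity $t_b(-C_{ab})=t_a(-C_{ba})$, a short index estimate shows that every index appearing in $c_m$ lies in $[0,t_bk]$, so by (i) and (iii) each factor is strictly positive and hence $c_m>0$. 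Rewriting the recurrence as $r_{m+1}=r_m-c_m/(a_{m-1}a_m)$ makes $(r_m)_{1\le m\le t_ak}$ strictly decreasing, and combining~(ii) with~(iii) yields the palindromic identity $r_jr_{t_ak+1-j}=1$ for $1\le j\le t_ak$. Strict decrease together with this identity forces $r_m>1$ precisely when $m\le\lfloor t_ak/2\rfloor$: for $t_ak=2s$ the central pair satisfies $r_s>r_{s+1}$ and $r_sr_{s+1}=1$, giving $r_s>1$, while for $t_ak=2s+1$ the unique central term satisfies $r_{s+1}^2=1$, so $r_{s+1}=1$ by positivity. Thus $a_{m-1}<a_m$ on the claimed range.

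The only substantive issue will be the index bookkeeping needed to conclude $c_m>0$ in~(v); everything else falls directly out of Theorem~\ref{qdimthm}. After that step the conclusion reduces to the general fact that a strictly decreasing positive sequence of ratios whose palindromic pairs multiply to $1$ must cross the value $1$ exactly at the centre.
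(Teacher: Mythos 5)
Your handling of (i)--(iv) coincides with the paper's: check the hypothesis of Theorem \ref{qdimthm} at $\hat{\mu}=\hat{0}$ using positivity of $W^{(a)}_m=R^{(a)}_m$ for $0\le m\le\lfloor t_ak/2\rfloor$, then specialize the four statements of that theorem. For (v), however, you take a genuinely different and more self-contained route. The paper gets the key strict log-concavity $0<x^{(a)}_m=\mathcal{D}^{(a)}_{m-1}\mathcal{D}^{(a)}_{m+1}/(\mathcal{D}^{(a)}_m)^2<1$ by citing (14.42), (14.45), (14.46) of \cite{1751-8121-44-10-103001}: positive definiteness of the matrix $K_{a,b}^{m,n}$ guarantees a unique positive solution of the associated $f$-system with $0<f^{(a)}_m<1$, and that solution is identified with $1-f^{(a)}_m=x^{(a)}_m$; the symmetry (ii) then anchors the central ratio and the $x_m$'s are chained. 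You instead derive the same inequality directly inside the fusion ring: applying $\operatorname{qdim}$ to the restricted $Q$-system of Theorem \ref{catQ} gives $a_m^2=a_{m-1}a_{m+1}+c_m$, and $c_m>0$ because every index occurring in $c_m$ lies in $[0,t_bk]$ (the symmetrizability bookkeeping $t_bC_{ab}=t_aC_{ba}$, which is exactly what makes Definition \ref{levresQ} well posed) and each such factor is positive by (i). Your endgame -- strictly decreasing ratios $r_m$, the palindromic relation $r_jr_{t_ak+1-j}=1$ from (ii)--(iii), and anchoring at the centre in the even/odd cases -- is an equivalent repackaging of the paper's final chaining. The trade-off: your argument avoids any appeal to the external KNS machinery at the cost of the (easy) index estimate, while the paper's is shorter because it reuses the known characterization of the quantum-dimension solution.
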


\begin{proof}
We know that $\mathcal{D}^{(a)}_{m}=\operatorname{qdim} R^{(a)}_{m}> 0$ for all $0 \le m \le \lfloor \frac{t_a k}{2}\rfloor$. Then by Theorem \ref{qdimthm} \ref{qdimthm:WR}, we have $\mathcal{D}^{(a)}_{m}=\operatorname{qdim} R^{(a)}_{m}> 0$ for all $0 \le m \le t_a k$.
Now all the properties \ref{KNSconj:1}-\ref{KNSconj:5} follow from Theorem \ref{qdimthm} as a special case when $\hat{\mu}=k\hat{\omega}_0$.

Now we prove the inequality part \ref{KNSconj:ineq}. 
Let us define $\{x^{(a)}_m\}_{(a,m)\in \mathring{H}_k}$ as
\begin{equation}\label{eq:xadef}
x_{m}^{(a)}=\frac{\mathcal{D}_{m-1}^{(a)}\mathcal{D}_{m+1}^{(a)}}{(\mathcal{D}_{m}^{(a)})^2},\quad (a,m) \in \mathring{H}_k.
\end{equation}
As $\{\mathcal{D}^{(a)}_{m}\}_{(a,m)\in H_k}$ is a positive real solution of the level $k$ restricted $Q$-system, we have
\begin{equation}\label{eq:xa}
0<x_{m}^{(a)}<1,\quad (a,m) \in \mathring{H}_k.
\end{equation}
Let $s=\lfloor \frac{t_a k}{2}\rfloor$ for the rest of the proof. If $t_a k$ is odd, by the symmetry of solutions, we have $\mathcal{D}_{s+1}^{(a)}=\mathcal{D}_{s}^{(a)}$. Then
from $x_{s}^{(a)}<1$, we have
$$x_{s}^{(a)}=\frac{\mathcal{D}_{s-1}^{(a)}\mathcal{D}_{s+1}^{(a)}}{\mathcal{D}_{s}^{(a)}\mathcal{D}_{s}^{(a)}}=\frac{\mathcal{D}_{s-1}^{(a)}}{\mathcal{D}_{s}^{(a)}}<1.$$ If $t_a k$ is even, again the symmetry condition implies $\mathcal{D}_{s+1}^{(a)}=\mathcal{D}_{s-1}^{(a)}$. Using this, we get $$x_{s}^{(a)}=\frac{\mathcal{D}_{s-1}^{(a)}\mathcal{D}_{s+1}^{(a)}}{\mathcal{D}_{s}^{(a)}\mathcal{D}_{s}^{(a)}}=\frac{(\mathcal{D}_{s-1}^{(a)})^2}{(\mathcal{D}_{s}^{(a)})^2}=\left(\frac{\mathcal{D}_{s-1}^{(a)}}{\mathcal{D}_{s}^{(a)}}\right)^2<1.$$

In both cases, we obtain the inequality $\frac{\mathcal{D}^{(a)}_{s-1}}{\mathcal{D}^{(a)}_s}<1$. Then (\ref{eq:xa}) implies
$$\frac{\mathcal{D}^{(a)}_0}{\mathcal{D}^{(a)}_1}<\frac{\mathcal{D}^{(a)}_1}{\mathcal{D}^{(a)}_2}<\cdots<\frac{\mathcal{D}^{(a)}_{s-1}}{\mathcal{D}^{(a)}_s}<1.$$
This proves the inequality
$$
\mathcal{D}^{(a)}_0<\mathcal{D}^{(a)}_1<\cdots<\mathcal{D}^{(a)}_s.
$$
\end{proof}

\begin{remark}
The corresponding statement is true in type $E_6$ and partially in type $E_7$ and $E_8$; see \cite{MR3282650}. The quantities $\{f_{m}^{(a)}\}_{(a,m) \in \mathring{H}_k}$ where $f_{m}^{(a)}=1-x_{m}^{(a)}$ in (\ref{eq:xadef}) are the arguments for the dilogarithm identities in \cite{springerlink:10.1007/BF01840426}, as we mentioned earlier. These numbers also define certain torsion elements of the Bloch group of an appropriate number field \cite{LeeCNTP2013}.
\end{remark}

\begin{corollary}
For each $(a,m)\in H_k$, let $\mathcal{A}_{m}^{(a)}$ be the fusion matrix of $R_{m}^{(a)}$. Then
\begin{enumerate}[label=(\roman{*}), ref=(\roman{*})]

\item $\mathcal{A}_{m}^{(a)}$ is a non-negative integral matrix,
\item the family $\{\mathcal{A}_{m}^{(a)}\}_{(a,m)\in H_k}$ is a solution of the level $k$ restricted $Q$-system in a certain commutative subring of the ring of the square matrices of size $|\hat{P}_{+}^{k}|$ over $\mathbb{Z}$,
\item under the same assumptions as in Theorem \ref{qdimthm},  $\mathcal{D}^{(a)}_{m,\hat{\mu}}$ is an eigenvalue of $\mathcal{A}_{m}^{(a)}$ and in particular, $\mathcal{D}_{m}^{(a)}$ is the Perron-Frobenius eigenvalue of it.
\end{enumerate}
\end{corollary}
\begin{proof}
It follows from Theorems \ref{catQ} and \ref{qdimthm} and Proposition \ref{Perron-Frobenius}.
\end{proof}

\begin{remark}
In \cite[Section 3.7]{1751-8121-44-10-103001}, $\mathcal{A}_{m}^{(a)}$ is called the admissibility matrix of $\operatorname{res} W^{(a)}_{m}$.
\end{remark}

\appendix
\newtheorem{aprop}[theorem]{Proposition}[section]
\section{Proof of Proposition \ref{Croot}}\label{app}
For each $a\in I$ in Table \ref{svertices} and $l=1,\cdots, h^{\vee}-1$, we will construct a positive root $\beta_l$ such that $(\omega_a|\beta_l)=1$ and $(\rho|\beta_l)=l$.
\subsection*{Type $A_r$}
We can use induction on $r$. The statement is true for $A_1$. Suppose that $r>1$. 
Let $\beta_{h^{\vee}-1}=\theta$ be the highest root $\alpha_1+\cdots+\alpha_r$. We have $(\omega_a|\beta_{h^{\vee}-1})=1$ and $(\rho|\beta_{h^{\vee}-1})=h^{\vee}-1$. We can choose a simple root $\alpha_j \neq \alpha_a$ such that $(\beta_{h^{\vee}-1}|\alpha_j)=1$. Note that the only possible choices are $\alpha_j=\alpha_1$ or $\alpha_j=\alpha_r$. Then $\Pi- \{\alpha_j\}$ forms a simple system of type $A_{r-1}$ with the highest root $\beta_{h^{\vee}-2}=\beta_{h^{\vee}-1}-\alpha_j$. Using induction hypothesis, we can construct a sequence of roots $\beta_{h^{\vee}-2},\cdots, \beta_{1}=\alpha_a$ satisfying the conditions $(\omega_a|\beta_l)=1$ and $(\rho|\beta_l)=l$. Thus we have constructed a sequence of roots with desired properties. 

\subsection*{Type $B_r$ ($r\ge 2$)}
We have only one vertex $a=1$ in Table \ref{svertices}. For $1\le l \le r$, we define $\beta_{h^{\vee}-l}$ as follows :  
\begin{itemize}

  \item $\beta_{h^{\vee}-1}=\theta=\alpha_1+2\sum_{j=2}^{r}\alpha_j$ 
  \item $\beta_{h^{\vee}-2}=s_2  \beta_{h^{\vee}-1}=\alpha_1+\alpha_2+2\sum_{j=3}^{r}\alpha_j$
  \item $\beta_{h^{\vee}-3}=s_3  \beta_{h^{\vee}-2}=\alpha_1+\alpha_2+\alpha_3+2\sum_{j=4}^{r}\alpha_j$
  \item $\cdots$
  \item $\beta_{h^{\vee}-(r-1)}=s_{r-1}\beta_{h^{\vee}-(r-2)}= \sum_{j=1}^{r-1}\alpha_j+2\alpha_r$
  \item $\beta_{h^{\vee}-r}=s_r \beta_{h^{\vee}-(r-1)}=\sum_{j=1}^{r-1}\alpha_j$
\end{itemize}

One can see that the conditions $(\omega_1|\beta_l)=1$ and $(\rho|\beta_l)=l$ are satisfied for each $1\le l \le r$.
Note that $\Pi- \{\alpha_r\}$ forms a simple system of type $A_{r-1}$ with the highest root $\beta_{h^{\vee}-r}=\beta_{r-1}$
, we can now imitate the construction for type $A_{r-1}$ to define the rest of the terms $\beta_{r-2},\cdots, \beta_{1}=\alpha_1$. 

\subsection*{Type $C_r$ ($r\ge 2$)}
Let $a=r$. For $1\le l \le r$, we define $\beta_{h^{\vee}-l}$ as follows :  
\begin{itemize}

\item $\beta_{h^{\vee}-1}=\theta=\alpha_r+2\sum_{j=1}^{r-1}\alpha_j$
\item $\beta_{h^{\vee}-2}=s_1 \beta_{h^{\vee}-1}= \alpha_r+2\sum_{j=2}^{r-1}\alpha_j$
\item $\cdots$
\item $\beta_{h^{\vee}-(r-1)}=s_{r-2} \beta_{h^{\vee}-(r-2)}=\alpha_r+2\alpha_{r-1}$
\item $\beta_{h^{\vee}-r}=s_{r-1}\beta_{h^{\vee}-(r-1)}=\alpha_{r}$
\end{itemize}

One can check that the conditions $(\omega_r|\beta_l)=1$ and $(\rho|\beta_l)=l$ are satisfied for $1\le l \le r=h^{\vee}-1$.

\subsection*{Type $D_r$ ($r\ge 4$)}
We assume that $a\in \{1, r-1, r\}$. For $1\le l \le r-2$, we define $\beta_{h^{\vee}-l}$ as follows :  
\begin{itemize}

\item $\beta_{h^{\vee}-1}=\theta=\alpha_1+\alpha_{r-1}+\alpha_{r}+2\sum_{j=2}^{r-2}\alpha_j$
\item $\beta_{h^{\vee}-2}= s_2 \beta_{h^{\vee}-1}= \alpha_1+\alpha_2+\alpha_{r-1}+\alpha_{r}+2\sum_{j=3}^{r-2}\alpha_j$
\item $\cdots$
\item $\beta_{h^{\vee}-(r-3)}= s_{r-3} \beta_{h^{\vee}-(r-3)}= \sum_{j=1}^{r}\alpha_j+\alpha_{r-2}$
\item $\beta_{h^{\vee}-(r-2)}=s_{r-2}\beta_{h^{\vee}-(r-3)}=\sum_{j=1}^{r}\alpha_j$
\end{itemize}

Note that the conditions $(\omega_a|\beta_l)=1$ and $(\rho|\beta_l)=l$ are satisfied for $1\le l \le r-2$.
To define the next term $\beta_{h^{\vee}-(r-1)}=\beta_{r-1}$, choose the vertex $j\in \{r-1,r\}$ such that $j\neq a$ and let
$\beta_{r-1}=\beta_{h^{\vee}-(r-2)}-\alpha_{j}$. Since $\Pi- \{\alpha_j\}$ forms a simple system of type $A_{r-1}$ with the highest root $\beta_{r-1}$, we can now use the construction for type $A_{r-1}$ to define the rest of the terms $\beta_{r-2},\cdots, \beta_{1}=\alpha_a$. 

\subsection*{Type $E_6$}
Let $a\in \{1, 5\}$. For $1\le l \le 4$, we define $\beta_{h^{\vee}-l}$ as follows :  
\begin{itemize}

\item $\beta_{h^{\vee}-1}=\theta=\alpha _1+2 \alpha _2+3 \alpha _3+2 \alpha _4+\alpha _5+2 \alpha _6$
\item $\beta_{h^{\vee}-2}= s_6 \beta_{h^{\vee}-1}= \alpha _1+2 \alpha _2+3 \alpha _3+2 \alpha _4+\alpha _5+ \alpha _6$
\item $\beta_{h^{\vee}-3}= s_3 \beta_{h^{\vee}-2}= \alpha _1+2 \alpha _2+2 \alpha _3+2 \alpha _4+\alpha _5+ \alpha _6$
\item $\beta_{h^{\vee}-4}=  
\begin{cases} 
s_4 \beta_{h^{\vee}-3}=\alpha _1+2 \alpha _2+2 \alpha _3+\alpha _4+\alpha _5+\alpha _6 & \text{if $a=1$}\\
s_2 \beta_{h^{\vee}-3}=\alpha _1+\alpha _2+2 \alpha _3+2 \alpha _4+\alpha _5+\alpha _6 & \text{if $a=5$} 
\end{cases}$
\end{itemize}

Note that the conditions $(\omega_a|\beta_l)=1$ and $(\rho|\beta_l)=l$ are satisfied for $1\le l \le 4$. In order to define the next term $\beta_{h^{\vee}-5}=\beta_{7}$, choose the vertex $j\in \{1,5\}$ such that $j\neq a$ and let
$$
\beta_{h^{\vee}-5}= s_j \beta_{h^{\vee}-4}=
\begin{cases} 
\alpha _1+2 \alpha _2+2 \alpha _3+\alpha _4+\alpha _6 & \text{if $a=1$}\\
\alpha _2+2 \alpha _3+2 \alpha _4+\alpha _5+\alpha _6 & \text{if $a=5$} 
\end{cases}.
$$
Since $\Pi- \{\alpha_j\}$ forms a simple system of type $D_5$ with the highest root $\beta_{h^{\vee}-5}=\beta_{7}$
, we can now use the construction for type $D_5$ to define the rest of the sequence $\beta_{6},\cdots, \beta_{1}=\alpha_a$. 

\subsection*{Type $E_7$}
We have only one vertex $a=6$. For $1\le l \le r$, we define $\beta_{h^{\vee}-l}$ by
\begin{itemize}

\item $\beta_{h^{\vee}-1} = \theta= 2 \alpha _1+3 \alpha _2+4 \alpha _3+3 \alpha _4+2 \alpha _5+\alpha _6+2 \alpha _7$
\item $\beta_{h^{\vee}-2} = s_1 \beta_{h^{\vee}-1} = \alpha _1+3 \alpha _2+4 \alpha _3+3 \alpha _4+2 \alpha _5+\alpha _6+2 \alpha _7$
\item $\beta_{h^{\vee}-3} = s_2 \beta_{h^{\vee}-2} =\alpha _1+2 \alpha _2+4 \alpha _3+3 \alpha _4+2 \alpha _5+\alpha _6+2 \alpha _7$
\item $\beta_{h^{\vee}-4} = s_3 \beta_{h^{\vee}-3} = \alpha _1+2 \alpha _2+3 \alpha _3+3 \alpha _4+2 \alpha _5+\alpha _6+2 \alpha _7$
\item $\beta_{h^{\vee}-5} = s_4 \beta_{h^{\vee}-4} = \alpha _1+2 \alpha _2+3 \alpha _3+2 \alpha _4+2 \alpha _5+\alpha _6+2 \alpha _7$
\item $\beta_{h^{\vee}-6} = s_7 \beta_{h^{\vee}-5} = \alpha _1+2 \alpha _2+3 \alpha _3+2 \alpha _4+2 \alpha _5+\alpha _6+\alpha _7$
\item $\beta_{h^{\vee}-7} = s_3 \beta_{h^{\vee}-6} = \alpha _1+2 \alpha _2+2 \alpha _3+2 \alpha _4+2 \alpha _5+\alpha _6+\alpha _7$
\end{itemize}

We further define
\begin{itemize}

\item $\beta_{h^{\vee}-8} = s_2 \beta_{h^{\vee}-7} = \alpha _1+\alpha _2+2 \alpha _3+2 \alpha _4+2 \alpha _5+\alpha _6+\alpha _7$
\item $\beta_{h^{\vee}-9} = s_1 \beta_{h^{\vee}-8} = \alpha _2+2 \alpha _3+2 \alpha _4+2 \alpha _5+\alpha _6+\alpha _7$
\end{itemize}
One can see that the conditions $(\omega_a|\beta_l)=1$ and $(\rho|\beta_l)=l$ are satisfied for $9\le l \le 17$.
Since $\Pi- \{\alpha_1\}$ forms a simple system of type $D_6$ with the highest root $\beta_{h^{\vee}-9}=\beta_{9}$, we now imitate the construction of type $D_6$ to define the rest of the sequence $\beta_{8},\cdots, \beta_{1}=\alpha_a$. 

\section{Solutions of level $k$ restricted $Q$-systems}\label{appQdec}
Let $(a,m)\in H$. We describe the decomposition (\ref{Qdecomp}) of  $\operatorname{res} W^{(a)}_{m}$ into irreducibles for all classical types. See \cite[Appendix]{MR1745263} for a reference. In theses cases, $Z(a, m,\omega)$ is 0 or 1 for each $\omega \in P_{+}$. Hence it is enough to describe the set $\Omega^{(a)}_{m}$
of weights defined by 
$$\Omega^{(a)}_{m}:=\{\omega\in P_{+}:Z(a, m,\omega)=1\}.$$
We will give it in a form by which we can determine the coefficient of $\hat{\omega}_0$ easily when we extend $\omega\in \Omega^{(a)}_{m}$  to an affine weight of level $k$.

\subsection*{Type $A_r$}
For $1\le a \le r$,
\begin{equation}\label{Aomega1}
\Omega^{(a)}_{m}=\{m\omega_{a}\}.
\end{equation}

For $\omega\in \Omega^{(a)}_{m}$ in (\ref{Aomega1}), its level $k$ affinization $\hat{\omega}$ is given by
$$
\hat{\omega}=(k-m)\hat{\omega}_0+m\hat{\omega}_{a}
$$ 
and it is clear that $\hat{\omega}\in \hat{P}_{+}^{k}$ when $(a,m)\in H_{k}$.

\subsection*{Type $B_r$}
For $a\in I$ even,
\begin{equation}
\omega\in \Omega^{(a)}_{m}
\iff
\begin{cases}
\omega=k_{a}\omega_{a} + k_{a-2}\omega_{a- 2} + \cdots + k_2\omega_2\\ 
k_a+t_a(k_{a-2} + \cdots + k_2+k_{0})  = m\\
k_a,k_{a-2},\cdots, k_2,k_0 \in \mathbb{Z}_{\ge 0} \\
\end{cases}.\label{Bomega1}
\end{equation}
For $\omega\in \Omega^{(a)}_{m}$ in (\ref{Bomega1}), its level $k$ affinization $\hat{\omega}$ is given by
$$
\hat{\omega}=k_{a}\hat{\omega}_{a} + k_{a-2}\hat{\omega}_{a- 2} + \cdots + k_2\hat{\omega}_2+\hat{k}_0\hat{\omega}_0
$$ 
where
\begin{equation}\label{BomegaEven0}
\hat{k}_0=
\begin{cases} 
k-2m+2k_0 & \text{if $1\le a\le r-1$}\\
k-m+2k_0 & \text{if $a=r$} 
\end{cases}.
\end{equation}

For $a\in I$ odd,
\begin{equation}
\omega\in \Omega^{(a)}_{m}
\iff
\begin{cases}
\omega=k_{a}\omega_{a} + k_{a-2}\omega_{a- 2} + \cdots + k_1\omega_1\\
k_a+t_a(k_{a-2}+\cdots+k_1)= m\\
k_a,k_{a-2},\cdots, k_1 \in \mathbb{Z}_{\ge 0} \\
\end{cases}.\label{Bomega2}
\end{equation}
For $\omega\in \Omega^{(a)}_{m}$ in (\ref{Bomega2}), we get
$$
\hat{\omega}=k_{a}\hat{\omega}_{a} + k_{a-2}\hat{\omega}_{a- 2} + \cdots + k_1\hat{\omega}_1+\hat{k}_0\hat{\omega}_0
$$ 
where
\begin{equation}\label{BomegaOdd0}
\hat{k}_0=
\begin{cases} 
k-2m+k_1 & \text{if $1\le a\le r-1$}\\
k-m+k_1 & \text{if $a=r$} 
\end{cases}.
\end{equation}

\subsection*{Type $C_r$}
For $1 \le a \le r-1$,
\begin{equation}
\omega\in \Omega^{(a)}_{m}
\iff
\begin{cases}
\omega=k_{a}\omega_{a} + k_{a-1}\omega_{a-1} + \cdots + k_1\omega_1\\
k_a+k_{a-1}+\cdots+k_1+k_0= m\\
k_b \equiv m\delta_{a, b} \pmod 2 \\
k_a,k_{a-1},\cdots, k_1,k_0 \in \mathbb{Z}_{\ge 0} \\
\end{cases}.\label{Comega}
\end{equation}
For $\omega\in \Omega^{(a)}_{m}$ in (\ref{Comega}), the corresponding affine weight is
$$
\hat{\omega}=k_{a}\hat{\omega}_{a} + k_{a-1}\hat{\omega}_{a-1} + \cdots + k_1\hat{\omega}_1+\hat{k}_0\hat{\omega}_0
$$ 
where
\begin{equation}\label{Comega0}
\hat{k}_0=k-m+k_0.
\end{equation}

For $a = r$,
$$
\Omega^{(a)}_{m}=\{m\omega_{a}\}
$$

\subsection*{Type $D_r$}
For even $a$ such that $2\le a \le r-2$, 
\begin{equation}
\omega\in \Omega^{(a)}_{m}
\iff
\begin{cases}
\omega=k_{a}\omega_{a} + k_{a-2}\omega_{a- 2} + \cdots + k_2\omega_2\\
k_a+k_{a-2}+\cdots+k_2+k_0 = m\\
k_a,k_{a-2},\cdots, k_2,k_0 \in \mathbb{Z}_{\ge 0} \\
\end{cases}.\label{Domega1}
\end{equation}
For $\omega\in \Omega^{(a)}_{m}$ in (\ref{Domega1}), the level $k$ affinization is
$$
\hat{\omega}=k_{a}\hat{\omega}_{a} + k_{a-2}\hat{\omega}_{a- 2} + \cdots + k_1\hat{\omega}_1+\hat{k}_0\hat{\omega}_0
$$
where
$$
\hat{k}_0=k-2m+2k_0.
$$

For odd $a$ such that $1\le a \le r-2$,
\begin{equation}
\omega\in \Omega^{(a)}_{m}
\iff
\begin{cases}
\omega=k_{a}\omega_{a} + k_{a-2}\omega_{a- 2} + \cdots + k_1\omega_1\\
k_a+k_{a-2}+\cdots+k_1+k_0 = m\\
k_a,k_{a-2},\cdots, k_1,k_0 \in \mathbb{Z}_{\ge 0} \\
\end{cases}.\label{Domega2}
\end{equation}
For $\omega\in \Omega^{(a)}_{m}$ in (\ref{Domega2}), we get
$$
\hat{\omega}=k_{a}\hat{\omega}_{a} + k_{a-2}\hat{\omega}_{a- 2} + \cdots + k_1\hat{\omega}_1+\hat{k}_0\hat{\omega}_0
$$
where
$$
\hat{k}_0=k-2m+k_0+k_1.
$$

For $a = r-1$ and $r$, we have $\Omega^{(a)}_{m}=\{m\omega_{a}\}$.

\section{Proof of Lemma \ref{centrosym} for type $B_r$}\label{proofB}
Let $\hat{\Omega}^{(a)}_m=\{\hat{\omega}\in \hat{P}^k :\omega\in \Omega^{(a)}_m\}$. For the rest of this section, we will denote the element $k_a\hat{\omega}_{a}+k_{a-2}\hat{\omega}_{a-2}+\cdots + k_{2} \hat{\omega}_{2}+\hat{k}_{0} \hat{\omega}_{0}\in \hat{P}^k$ by $(k_a,k_{a-2},\cdots, k_2, \hat{k}_0)$ when $a$ is even and $k_{a}\hat{\omega}_{a}+k_{a-2}\hat{\omega}_{a-2}+\cdots + k_{1} \hat{\omega}_{1} +\hat{k}_{0} \hat{\omega}_{0}\in \hat{P}^k$ by $(k_a,k_{a-2},\cdots, k_1, \hat{k}_0)$ when $a$ is odd. Recall (\ref{pi}) the projection $\pi:\hat{P}\to P$ defined by
$$
\pi(\sum_{i=0}^{r}\lambda_{i}\hat{\omega}_{i})=\sum_{i=1}^{r}\lambda_{i}\omega_{i}.
$$

\subsection*{The case of the vertex $a=r$}
\begin{prop}
Let $r$ be even and $a=r$. We have
$$\beta_k(\operatorname{res} W_{k+1}^{(a)})=\beta_k(\operatorname{res} W_{k-1}^{(a)}).$$ 
\end{prop}
\begin{proof}
Note that $\hat{\Omega}^{(a)}_{k-1}\subseteq \hat{\Omega}^{(a)}_{k+1}$ and 
$$
\hat{\Omega}^{(a)}_{k+1} \setminus \hat{\Omega}^{(a)}_{k-1}= \left\{(k_a,k_{a-2},\cdots, k_2, \hat{k}_{0})\in \hat{P}^k\mid
\begin{array}{ll}
k_a+2(k_{a-2}+\cdots+k_2)=k+1 \\
k_a,k_{a-2},\cdots, k_2 \in \mathbb{Z}_{\geq 0} \\
\end{array}
\right\}.
$$
If $\hat{\omega}=(k_a,k_{a-2},\cdots, k_2, \hat{k}_0)\in \hat{\Omega}^{(a)}_{k+1} \setminus \hat{\Omega}^{(a)}_{k-1}$, then $\hat{k}_0=-1$ by (\ref{BomegaEven0}) and thus $\beta_{k}(V_{\pi(\hat\omega)})=0$ since $s_0\cdot \hat\omega=\hat\omega$. Hence $\beta_k(\operatorname{res} W_{k+1}^{(a)})=\beta_k(\operatorname{res} W_{k-1}^{(a)})$. 
\end{proof}

\begin{prop}
Let $r$ be odd and $a=r$. We have
$$\beta_k(\operatorname{res} W_{k+1}^{(a)})=V_{k\tau_{a}}\beta_k(\operatorname{res}W_{k-1}^{(a)})^{*}.$$
\end{prop}
\begin{proof}
For any $\omega=(k_a,k_{a-2},\cdots, k_1,\hat{k}_0)\in \hat{\Omega}^{(a)}_{k+1}$ with $k_1=0$, we get $\hat{k}_0=-1$ by (\ref{BomegaOdd0}) and thus $\beta_{k}(V_{\pi(\hat\omega)})=0$.

Let
$$
(\hat{\Omega}^{(a)}_{k+1})'= \{(k_a,k_{a-2},\cdots, k_1, \hat{k}_0)\in \hat{\Omega}^{(a)}_{k+1} \mid k_1\ge 1\}.$$
Let us define a map from $(\hat{\Omega}^{(a)}_{k+1})'$ to $\hat{\Omega}^{(a)}_{k-1}$ by 
\begin{equation}\label{s1110}
(k_a,k_{a-2},\cdots, k_1, \hat{k}_0) \mapsto (k_a,k_{a-2},\cdots, \hat{k}_0, k_1).
\end{equation}
For $(k_a,k_{a-2},\cdots, k_1, \hat{k}_0)\in (\hat{\Omega}^{(a)}_{k+1})'$, we get $\hat{k}_0=k_1-1\ge 0$ by (\ref{BomegaOdd0}). This shows that $(k_a,k_{a-2},\cdots, \hat{k}_0, k_1)\in \hat{\Omega}^{(a)}_{k-1}$ and thus the map (\ref{s1110}) is well-defined. It is clear that this is injective.

Conversely, any element $(k_a,k_{a-2},\cdots, k_1, \hat{k}_0)\in \hat{\Omega}^{(a)}_{k-1}$ satisfies $\hat{k}_0=k_1+1\ge 1$ again by (\ref{BomegaOdd0}) and it proves that (\ref{s1110}) is surjective and thus bijective. This proves our proposition.
\end{proof}

\subsection*{The case of the vertices $1\le a \le r-1$ when $k$ is odd}
Let $s=\frac{k-1}{2}$. 
\begin{prop}
If $a$ is even and $1\le a \le r-1$, then 
$$\beta_k(\operatorname{res} W^{(a)}_{s})=\beta_k(\operatorname{res} W^{(a)}_{s+1}).$$
\end{prop}
\begin{proof}
Note that $\hat{\Omega}^{(a)}_{s}\subseteq \hat{\Omega}^{(a)}_{s+1}$ and 
$$
\hat{\Omega}^{(a)}_{s+1} \setminus \hat{\Omega}^{(a)}_{s}= \left\{(k_a,k_{a-2},\cdots, k_2, \hat{k}_0)\in \hat{P}^k\mid
\begin{array}{ll}
k_a+k_{a-2}+\cdots+k_2= s+1 \\
k_a,k_{a-2},\cdots, k_2 \in \mathbb{Z}_{\geq 0} \\
\end{array}
\right\}.
$$
If $\hat{\omega}=(k_a,k_{a-2},\cdots, k_2, \hat{k}_0)\in \hat{\Omega}^{(a)}_{s+1} \setminus \hat{\Omega}^{(a)}_{s}$, then $\hat{k}_0=-1$. 
So for any $\hat{\omega}\in \hat{\Omega}^{(a)}_{s+1} \setminus \hat{\Omega}^{(a)}_{s}$, $\beta_{k}(V_{\pi(\hat\omega)})=0$ since $s_0\cdot \hat\omega=\hat\omega$.
Thus $\beta_k(\operatorname{res} W^{(a)}_{s+1})=\beta_k(\operatorname{res}W^{(a)}_{s})$.
\end{proof}

\begin{prop}
If $a$ is odd and $1\le a \le r-1$, then we have
$$\beta_k(\operatorname{res}W_{s+1}^{(a)})=V_{k\tau_{a}}\beta_k(\operatorname{res}W_{s}^{(a)})^{*}.$$
\end{prop}
\begin{proof}
Let $\hat{\omega}=(k_a,k_{a-2},\cdots, k_1, \hat{k}_0)\in \hat{\Omega}^{(a)}_{s+1}$. If $k_1=0$, then $\hat{k}_0=-1$ by (\ref{BomegaOdd0}) and thus $V_{\hat\omega}=0$ since $s_0\cdot \hat\omega=\hat\omega$.

Let $$
(\hat{\Omega}^{(a)}_{s+1})'= \{(k_a,k_{a-2},\cdots, k_1, \hat{k}_0)\in \hat{\Omega}^{(a)}_{s+1} \mid k_1\ge 1\}.$$

Let us construct a bijection between $(\hat{\Omega}^{(a)}_{s+1})'$ and $\hat{\Omega}^{(a)}_{s}$. Define a map from $(\hat{\Omega}^{(a)}_{s+1})'$ to $\hat{\Omega}^{(a)}_{s}$ by
\begin{equation}\label{map010}
(k_a,k_{a-2},\cdots, k_1, \hat{k}_0) \mapsto (k_a,k_{a-2},\cdots, \hat{k}_0, k_1).
\end{equation}
To see that the map is well-defined, note that if $(k_a,k_{a-2},\cdots, k_1, \hat{k}_0)\in (\hat{\Omega}^{(a)}_{s+1})'$, then $\hat{k}_0=k_1-1\ge 0$ by (\ref{BomegaOdd0}). Since 
$$
k_a+k_{a-2}+\cdots+k_3+ \hat{k}_0=k_a+k_{a-2}+\cdots+k_3+ (k_{1}-1)=s,
$$
we have $(k_a,k_{a-2},\cdots, \hat{k}_0, k_1)\in \hat{\Omega}^{(a)}_{s}$. The map (\ref{map010}) is clearly injective.

Conversely, any element $(k_a,k_{a-2},\cdots, k_1, \hat{k}_0)\in \hat{\Omega}^{(a)}_{s}$ satisfies the condition $\hat{k}_0=k_1+1\ge 1$ which shows that (\ref{map010}) is surjective. We thus have proved that (\ref{map010}) is a bijection between $(\hat{\Omega}^{(a)}_{s+1})'$ and $\hat{\Omega}^{(a)}_{s}$. This proves our assertion.
\end{proof}

\subsection*{The case of the vertices $1\le a \le r-1$ when $k$ is even}
Let $s=\frac{k}{2}$. 

\begin{lemma}\label{koddm4}
Let $a$ be even and $1\le a \le r-1$. If $\hat\omega=(k_a,k_{a-2},\cdots, k_2,\hat{k}_0)\in \hat{P}^k$ satisfies $k_2=0$ and $\hat{k}_0=-2$, then $\beta_{k}(V_{\pi(\hat\omega)})=0$.
\end{lemma}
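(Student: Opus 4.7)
The plan is to invoke Proposition \ref{pro:quantumD}: it suffices to exhibit $w\in\hat W$ of odd signature such that $w\cdot\hat\omega=\hat\omega$, equivalently such that $w(\mu)=\mu$ where $\mu:=\hat\omega+\hat\rho$. From the hypotheses $k_2=0$, $\hat k_0=-2$, together with the fact that node $1$ is absent from the support of $\hat\omega$ (since $a$ is even), the first three coordinates of $\mu$ in the $\{\hat\omega_i\}$-basis are $(\mu_0,\mu_1,\mu_2)=(-1,1,1)$.

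Since $2\le a\le r-1$ forces $r\ge 3$, in the affine diagram $B_r^{(1)}$ node $0$ is attached only to node $2$ by a single edge, giving $\hat\alpha_0=2\hat\omega_0-\hat\omega_2$. Therefore
$$s_0\mu=\mu-\mu_0\hat\alpha_0=\mu+2\hat\omega_0-\hat\omega_2,$$
whose $2$-coordinate equals $1-1=0$. Hence $s_2$ fixes $s_0\mu$, and $s_0 s_2 s_0\cdot\mu=\mu$.

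The element $w=s_0 s_2 s_0$ is a reflection in $\hat W$ (the conjugate $s_{s_0(\alpha_2)}$, equivalently $s_2 s_0 s_2$ by the braid relation $(s_0s_2)^3=1$), so $\ell(w)$ is odd. Proposition \ref{pro:quantumD} then gives $\mathcal{D}_{\hat\omega}=0$, and by (\ref{Valcove}) we conclude $V_{\hat\omega}=0$. The only nontrivial input is the local shape of $B_r^{(1)}$ at node $0$; the key observation is simply that $\mu_0+\mu_2=0$, which places $\mu$ on the wall of the reflection $s_{s_0(\alpha_2)}$, so there is no significant obstacle.
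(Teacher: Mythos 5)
Your proof is correct and is essentially the paper's argument: the paper likewise verifies $(s_0s_2s_0)\cdot\hat\omega=\hat\omega$ and concludes $V_{\hat\omega}=0$ via the identification (\ref{Valcove}) and Proposition \ref{pro:quantumD}, and you have simply made the affine-diagram computation and the odd-signature observation (the sign homomorphism gives $(-1)^{\ell(s_0s_2s_0)}=-1$) explicit.
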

\begin{proof}
From $(s_0s_2s_0)\cdot (k_a,k_{a-2},\cdots, 0, -2)=(k_a,k_{a-2},\cdots, 0, -2)$, we can deduce that $\beta_{k}(V_{\pi(\hat\omega)})=0$.
\end{proof}

\begin{prop}
If $a$ is even and $1\le a \le r-1$, then $\beta_k(\operatorname{res}W^{(a)}_{s+1})=\beta_k(\operatorname{res}W^{(a)}_{s-1})$.
\end{prop}
\begin{proof}
Recall that
$$
\hat{\Omega}^{(a)}_{s-1}= \left\{(k_a,k_{a-2},\cdots, k_2, \hat{k}_0)\in \hat{P}^k\mid
\begin{array}{ll}
k_a+k_{a-2}+\cdots+k_2 \le s-1\\
k_a,k_{a-2},\cdots, k_2 \in \mathbb{Z}_{\geq 0} \\
\end{array}
\right\}
$$
and 
$$
\hat{\Omega}^{(a)}_{s+1}= \left\{(k_a,k_{a-2},\cdots, k_2, \hat{k}_0)\in \hat{P}^k\mid
\begin{array}{ll}
k_a+k_{a-2}+\cdots+k_2 \le s+1\\
k_a,k_{a-2},\cdots, k_2 \in \mathbb{Z}_{\geq 0} \\
\end{array}
\right\}.
$$

Let us define three disjoint subsets $R,S$ and $T$ of $\hat{\Omega}^{(a)}_{s+1}$ by
\begin{align}
&R=\{(k_a,k_{a-2},\cdots, k_2, \hat{k}_0)\in \hat{\Omega}^{(a)}_{s+1}:k_a+k_{a-2}+\cdots+k_2=s+1, k_2 = 0\}, \notag\\
&S=\{(k_a,k_{a-2},\cdots, k_2, \hat{k}_0)\in \hat{\Omega}^{(a)}_{s+1}:k_a+k_{a-2}+\cdots+k_2=s+1, k_2 \ge 1\}, \notag\\
&T=\{(k_a,k_{a-2},\cdots, k_2, \hat{k}_0)\in \hat{\Omega}^{(a)}_{s+1}:k_a+k_{a-2}+\cdots+k_2=s\}. \notag
\end{align}

For $\hat{\omega}\in R$, we get $\hat{k}_0=-2$. Thus $\beta_{k}(V_{\pi(\hat\omega)})=0$ by Lemma \ref{koddm4} and so $\sum_{\hat{\omega}\in R} \beta_{k}(V_{\pi(\hat\omega)})=0$. We now want to prove $\sum_{\hat{\omega}\in S\cup T}\beta_{k}(V_{\pi(\hat\omega)})=0$. By (\ref{BomegaEven0}), if $(k_a,k_{a-2},\cdots, k_2, \hat{k}_0)\in S$, then $\hat{k}_0=-2$ and if $(k_a,k_{a-2},\cdots, k_2,\hat{k}_0)\in T$, then $\hat{k}_0=0$. We have a bijection between $S$ and $T$ since $$s_0\cdot (k_a,k_{a-2},\cdots, k_2, -2)=(k_a,k_{a-2},\cdots, k_2-1, 0).$$
By (\ref{Valcove}), $\sum_{\hat{\omega}\omega\in S\cup T} \beta_{k}(V_{\pi(\hat\omega)})=0$. Consequently, $$\beta(\operatorname{res} W^{(a)}_{s+1})=\sum_{\hat{\omega} \in \hat{\Omega}^{(a)}_{s+1}} \beta_{k}(V_{\pi(\hat\omega)})=\sum_{\hat{\omega} \in \hat{\Omega}^{(a)}_{s+1}\setminus (R\cup S\cup T)} \beta_{k}(V_{\pi(\hat\omega)}).$$

From $\hat{\Omega}^{(a)}_{s-1}=\hat{\Omega}^{(a)}_{s+1}\setminus (R\cup S\cup T)$, we obtain $\beta_k(\operatorname{res} W^{(a)}_{s+1})=\beta_k(\operatorname{res} W^{(a)}_{s-1})$.
\end{proof}

\begin{lemma}\label{koddm5}
Let $a$ be odd and $1\le a \le r-1$. If $\hat\omega=(k_a,k_{a-2},\cdots,1, -1)\in \hat{P}^k$ or $\hat\omega=(k_a,k_{a-2},\cdots, 0, -2)\in \hat{P}^k$, then $\beta_{k}(V_{\pi(\hat\omega)})=0$.
\end{lemma}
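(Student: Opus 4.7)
\medskip

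\noindent\textbf{Proof proposal.}
By Proposition \ref{pro:quantumD}, it suffices in each case to exhibit an element $w\in\hat{W}$ of odd length that fixes $\hat\omega$ under the shifted affine Weyl action; then $\mathcal{D}_{\hat\omega}=0$, which via (\ref{Valcove}) forces $V_{\hat\omega}=0$. So the whole argument reduces to a short computation with the affine simple reflections $s_0,s_1,s_2$.

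The first step is to compute the shifted action on the affine coordinates. In the extended Dynkin diagram of type $B_r^{(1)}$, the affine node $0$ is attached to node $2$, so $\hat\alpha_0=2\hat\omega_0-\hat\omega_2$, and likewise $\hat\alpha_1=2\hat\omega_1-\hat\omega_2$ and $\hat\alpha_2=-\hat\omega_0-\hat\omega_1+2\hat\omega_2-\hat\omega_3$. Writing $\hat\lambda=\sum_i c_i\hat\omega_i$, a direct calculation with $s_i\cdot\hat\lambda=s_i(\hat\lambda+\hat\rho)-\hat\rho$ gives
\begin{align*}
s_0\cdot\hat\lambda&:\quad c_0\mapsto -c_0-2,\qquad c_2\mapsto c_0+c_2+1,\\
s_2\cdot\hat\lambda&:\quad c_0\mapsto c_0+c_2+1,\ c_1\mapsto c_1+c_2+1,\ c_2\mapsto -c_2-2,\ c_3\mapsto c_3+c_2+1,
\end{align*}
with all other coordinates unchanged. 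These are the only formulas that will be needed.

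For Case 1, write $\hat\omega=(k_a,\ldots,k_3,1,-1)$, so $c_0=-1$, $c_1=1$, $c_2=0$. Plugging in, $s_0\cdot\hat\omega$ sends $c_0\mapsto -(-1)-2=-1$ and $c_2\mapsto -1+0+1=0$, while every other coordinate is untouched; hence $s_0\cdot\hat\omega=\hat\omega$, and since $\ell(s_0)=1$ is odd we are done.

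For Case 2, write $\hat\omega=(k_a,\ldots,k_3,0,-2)$, so $c_0=-2$, $c_1=0$, $c_2=0$. The plan is to run the very same three-step computation used in Lemma \ref{koddm4}: apply $s_0$ (obtaining $c_0=0$, $c_2=-1$), then $s_2$ (which returns $c_2$ to $-1$ and restores $c_0=0$, $c_1=0$, $c_3=k_3$), then $s_0$ again (bringing $c_0$ back to $-2$ and $c_2$ back to $0$). Thus $s_0s_2s_0\cdot\hat\omega=\hat\omega$ and its length $3$ is odd. The only thing to notice is that the extra coordinate $c_1=0$ in the odd-$a$ case stays at $0$ throughout, since $s_0$ does not touch $c_1$ and the $s_2$ step sends $c_1\mapsto 0+(-1)+1=0$; this is the one cosmetic difference from the even-$a$ situation of Lemma \ref{koddm4}, and it is precisely what makes the same word $s_0s_2s_0$ work here.

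There is no serious obstacle: the argument is purely a verification that the stated odd-length word of affine reflections stabilizes the shifted weight. The only potentially delicate point is bookkeeping the action on $c_1$ and $c_3$ when $a$ is odd, which is handled by the explicit formula for $s_2\cdot$ above.
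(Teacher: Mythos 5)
Your proposal is correct and takes essentially the same route as the paper: the paper's proof likewise checks that $s_0\cdot(k_a,k_{a-2},\cdots,1,-1)=(k_a,k_{a-2},\cdots,1,-1)$ and $(s_0s_2s_0)\cdot(k_a,k_{a-2},\cdots,0,-2)=(k_a,k_{a-2},\cdots,0,-2)$, then concludes $V_{\hat\omega}=0$ via the odd-signature fixed-point criterion (Proposition \ref{pro:quantumD} and (\ref{Valcove})). You have merely written out explicitly the coordinate computation that the paper leaves to the reader.
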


\begin{proof}
It suffices to check that
$$s_0\cdot (k_a,k_{a-2},\cdots,1, -1)=(k_a,k_{a-2},\cdots,1, -1)$$ 
and
$$(s_0s_2s_0)\cdot (k_a,k_{a-2},\cdots, 0, -2)=(k_a,k_{a-2},\cdots, 0, -2).$$
\end{proof}

\begin{prop}
If  $a$ is odd and $1\le a \le r-1$, then $\beta_k(\operatorname{res} W_{s+1}^{(a)})=V_{k\tau_{a}}\beta_k(\operatorname{res}W_{s-1}^{(a)})^{*}$.
\end{prop}
\begin{proof}
For any $\hat{\omega}=(k_a,k_{a-2},\cdots, k_1, \hat{k}_0)\in \hat{\Omega}^{(a)}_{s+1}$ with $k_1=0$ or $k_1=1$, $\beta_{k}(V_{\pi(\hat\omega)})=0$ by Lemma \ref{koddm5}. Let $(\hat{\Omega}^{(a)}_{s+1})'= \{(k_a,k_{a-2},\cdots, k_1, \hat{k}_0)\in \hat{\Omega}^{(a)}_{s+1} \mid k_1\ge 2\}
$.
Then we can write $\beta_k(\operatorname{res}(W^{(a)}_{s+1})=\sum_{\hat{\omega}\in (\hat{\Omega}^{(a)}_{s+1})'} \beta_{k}(V_{\pi(\hat\omega)})$.

Let us define a map from $(\hat{\Omega}^{(a)}_{s+1})'$ to $\hat{\Omega}^{(a)}_{s-1}$ by 
\begin{equation}\label{s111}
(k_a,k_{a-2},\cdots, k_1, \hat{k}_0) \mapsto (k_a,k_{a-2},\cdots, \hat{k}_0, k_1).
\end{equation}
For $(k_a,k_{a-2},\cdots, k_1, \hat{k}_0)\in (\hat{\Omega}^{(a)}_{s+1})'$, $\hat{k}_0=k_1-2\ge 0$. It shows that $$(k_a,k_{a-2},\cdots, \hat{k}_0, k_1)\in \hat{\Omega}^{(a)}_{s-1}$$ and thus the map (\ref{s111}) is well-defined. It is clear that this is injective.

Conversely, any element $(k_a,k_{a-2},\cdots, k_1, \hat{k}_0)\in \hat{\Omega}^{(a)}_{s-1}$ satisfies $\hat{k}_0=k_1+2\ge 2$. It proves that (\ref{s111}) is surjective and thus bijective. This finishes our proof of the proposition.
\end{proof}

\section{Proof of Lemma \ref{centrosym} for type $C_r$}\label{proofC}
\subsection*{The case of the vertices $1\le a\le r-1$}
\begin{aprop}
If $1\le a\le r-1$, then $\beta_k(\operatorname{res} W^{(a)}_{k-1})=\beta_k(\operatorname{res} W^{(a)}_{k+1})$.
\end{aprop}

\begin{proof}
Note that $\hat{\Omega}^{(a)}_{k-1}\subseteq \hat{\Omega}^{(a)}_{k+1}$ and 
$$
\hat{\Omega}^{(a)}_{k+1} \setminus \hat{\Omega}^{(a)}_{k-1}= \left\{(k_a,k_{a-1},\cdots, k_1, \hat{k}_0)\in \hat{P}^k\mid
\begin{array}{ll}
k_a+k_{a-1}+\cdots+k_1=k+1 \\
k_a,k_{a-1},\cdots, k_1 \in \mathbb{Z}_{\geq 0} \\
\end{array}
\right\}.
$$
If $\hat{\omega}=(k_a,k_{a-1},\cdots, k_1, \hat{k}_0)\in \hat{\Omega}^{(a)}_{k+1} \setminus \hat{\Omega}^{(a)}_{k-1}$, then $\hat{k}_0=-1$ by (\ref{Comega0}). So for any $\hat{\omega}\in \hat{\Omega}^{(a)}_{k+1} \setminus \hat{\Omega}^{(a)}_{k-1}$, $\beta_{k}(V_{\pi(\hat\omega)})=0$ since $s_0\cdot \hat\omega=\hat\omega$.
 Thus $\beta_k(\operatorname{res} W^{(a)}_{k+1})=\beta_k(\operatorname{res} W^{(a)}_{k-1})$.
\end{proof}

\subsection*{The case of the vertex $a=r$}
It follows from Theorem \ref{simplepconj}.
\bibliographystyle{amsalpha}
\bibliography{modularQ}
\end{document}